\theoremstyle{plain} 
\newtheorem{theorem}{Theorem}[section]
\newtheorem{lemma}[theorem]{Lemma}
\newtheorem{corollary}[theorem]{Corollary}
\theoremstyle{definition} 
\newtheorem{definition}[theorem]{Definition}
\newtheorem{example}[theorem]{Example}
\newtheorem{remark}{Remark}
\newcommand{\R}{\mathbb{R}}
\newcommand{\N}{\mathbb{N}}
\newcommand{\ind}[1]{\mathds{1}_{\left[ #1 \right] }}
\newcommand{\eqd}{\,{\buildrel d \over =}\,}
\newcommand{\diff}[1]{\mathrm{d} #1}
\newcommand{\RV}[1]{\mathcal{RV}_{ #1 }}
\newcommand{\D}{\mathbb{D}}
\newcommand{\Ceq}[1]{\mathbb{C}_{= #1 }}
\newcommand{\M}{\mathbb{M}}
\newcommand{\MO}{\M_\mathbb{O}}
\newcommand{\Rjzero}{\left(\R\backslash\{0\}\right)^j}
\newcommand{\thenum}{^{\text{th}}}
\newcommand{\Leb}{\mathrm{Leb}}
\newcommand{\LD}[1]{ \mathrm{LD} \left( #1 \right)}
\newcommand{\Prob}{\mathrm{P}}
\renewcommand{\P}[1]{\Prob \left( #1 \right)}
\newcommand{\E}{\mathbb{E}}
\newcommand{\bE}[1]{\mathbb{E}\left[ #1 \right]}
\newcommand{\var}[1]{\mathrm{Var}\left[ #1 \right]}
\newcommand{\bfS}{\mathbb{S}}
\newcommand{\bK}{\boldsymbol{K}}
\newcommand{\bk}{\boldsymbol{k}}
\newcommand{\bI}{\boldsymbol{I}}
\newcommand{\vague}{\stackrel{\lower0.2ex\hbox{$\scriptscriptstyle
                    \it{v} $}}{\rightarrow}}
\newcommand{\weak}{\stackrel{\lower0.2ex\hbox{$\scriptscriptstyle
                    \it{w} $}}{\rightarrow}}
\newcommand{\what}{\stackrel{\lower0.2ex\hbox{$\scriptscriptstyle
                    \it{\hat{w}} $}}{\rightarrow}}
\newcommand{\eqdis}{\stackrel{\lower0.2ex\hbox{$\scriptscriptstyle
                    \mathrm{d}$}}{=}}
\newcommand{\distr}{\stackrel{\lower0.2ex\hbox{$\scriptscriptstyle
                    \it{d} $}}{\rightarrow}}
\def\bA{\boldsymbol A}
\def\bB{\boldsymbol B}
\def\bQ{\boldsymbol Q}
\def\bX{\boldsymbol X}
\def\bZ{\boldsymbol Z}
\def\bu{\boldsymbol u}
\def\bS{\boldsymbol S}
\def\bz{\boldsymbol z}
\definecolor{darkgreen}{RGB}{0,139,0}
\begin{document}

\begin{frontmatter}
\title{Heavy-tailed random walks, buffered queues and hidden large deviations}
\runtitle{Heavy-tailed buffered queues}

\begin{aug}
\author{\fnms{Harald} \snm{Bernhard} \ead[label=e1]{harald\_bernhard@mymail.sutd.edu.sg}}
 \and
  \author{\fnms{Bikramjit}  \snm{Das} \ead[label=e2]{bikram@sutd.edu.sg}}

  \runauthor{H. Bernhard \and B. Das}

  \affiliation{Singapore University of Technology and Design}

  \address{Singapore University of Technology and Design,\\ Pillar of Engineering Systems and Design,\\8 Somapah Road, Singapore 487372 \\
           \printead{e1}\\ \printead{e2}}

\end{aug}

\begin{abstract}
  It is well-known that large deviations of random walks driven by independent and identically distributed heavy-tailed random variables are governed by the so-called principle of one large jump.
  We note that further subtleties hold for such random walks in the large deviation scale which we call hidden large deviation. We apply this idea in the context of queueing processes with heavy-tailed
  service times and study approximations of severe congestion times for (buffered) queues. We conclude with simulated examples to verify our results.

\end{abstract}

\begin{keyword}[class=AMS]
\kwd[Primary ]{60F10}
\kwd{60G50}
\kwd{60G70}
\kwd[; secondary ]{60B10}
\kwd{62G32}
\end{keyword}

\begin{keyword}
\kwd{buffered queues}
\kwd{heavy-tails}
\kwd{large deviations}
\kwd{regular variation}
\end{keyword}

\end{frontmatter}

%
%
%
%
%
%
%
%
%
%
\section{Introduction}
Stochastic processes with heavy-tailed components as building blocks are of interest in many areas of application, including, but not restricted to,  hydrology \citep{anderson:meerschaert:1998}, finance, insurance and risk management
  \citep{smith:2003,embrechts1997modellingBook,ibragimov:jaffee:walden:2011},  tele-traffic data \citep{crovella:bestavros:taqqu:1998}, queueing theory \citep{boxma:cohen:1999,zwart2001:bptail}, social networks and random
  graphs \citep{ bollobas:borgs:chayes:riordan:2003, durrett:2010}. The notion of heavy-tails in applied probability is often studied under the paradigm of regular variation. In this paper we concentrate on understanding subtle properties of heavy-tailed random walks which enables us to understand structures of simple GI/G/1 queues with heavy-tailed service times under certain regularity conditions. In particular, we establish how queueing congestion may occur (which we define in terms of long intense periods), not only because of one large jump, but also in terms of further jumps occurring in the process.

  It is well-known that if $\{Z_i\}_{i\geq 1}$  are iid zero mean  regularly varying random variables, then large deviations of their partial sums  $S_n = \sum_{i=1}^n Z_i$ are essentially due to one of the random variables $Z_i$ attaining a large value, 
see \cite[Section 8.6]{embrechts1997modellingBook} for further details. Early results on this notion popularly known as the \emph{principle of one large jump} were obtained in \cite{nagaev:1969a,nagaev:1969b,nagaev:1969c}. More formally, the notion of one large jump in this case  can be written as
\begin{align}
	\label{eqn_nagaev}
	\P{|S_n| > x} = n\P{|Z_1|>x} (1+ o(1)), \quad   x>b_n
\end{align}
for some choice of $b_n \uparrow \infty$ as $n\to \infty$. Similar large deviation principles (LDPs) have been obtained in \cite{denisov:dieker:shneer:2008} under the more general assumption of the random variables being sub-exponential. Moving forward from random variables, the notion of regular variation of c\`adl\`ag processes has been characterized in \cite{hult:lindskog:2005SPA}. It was aptly noted in \cite{hult2005} that large deviations for such processes with heavy-tailed margins are very closely related to the notion of regular variation.

A precise large deviation result for partial sum processes on the space $\D[0,1]$ of c\`adl\`ag functions was provided in \citep[Theorem 2.1]{hult2005}; in fact this result was obtained for $d$-dimensional processes. In particular for $d=1$, the authors establish  that if $S^{n} =(S_{\lfloor nt\rfloor})_{t\in{[0,1]}}$
 is the c\`adl\`ag embedding of $\{S_{k}\}_{k=1}^n$ into $\D[0,1]$ with $S_{0}=0$, for suitably chosen sequences $\gamma_n>0$ and $\lambda_n \uparrow \infty$ one can observe that
\begin{align}
	\label{eqn_ld_principle_hult_et_al}
	\gamma_n \P{S^n /\lambda_n \in \;\cdot\; } \stackrel{ w^{\#}}{\to} \mu(\cdot ), \quad n\to \infty,
\end{align}
for a non-null measure $\mu$, where $\stackrel{w^{\#}}{\to}$ denotes convergence in the space of boundedly finite measures on $\overline{\D}_{0}$, see \cite[Appendix 2.6]{daley2003introPPvol1} and \cite{hult2005} for further details on the space and $w^{\#}$-convergence. In particular the result  shows that an appropriate  choice of scaling is $\gamma_{n} = [n \P{Z_1>\lambda_{n}}]^{{-1}}$ and  the limit measure $\mu$ concentrates all its mass on step functions with exactly one jump discontinuity, which essentially retrieves the \emph{one large jump principle}. 

Now it seems likely that there are possibilities - albeit rarer than the above case - that a large deviation of $S_{n}$ may occur because two or more of the random variables $\{Z_{i}\}_{i=1}^n$ were large. The probabilities of these events, although negligible under the scaling $\gamma_{n} = [n \P{Z_1>\lambda_{n}}]^{{-1}}$, are not exactly zero. Consequently, in this paper we aim to recover the rates at which such deviations happen and examine their structure. Furthermore our goal is to use such results in the context of queueing to understand the behavior of what we call long intense periods in a large-deviation-type event. 
Analysis of hidden behavior of regularly varying sequences on $\R^d$ \citep{resnick:2002a,das2013multidimEdge} and more recently on $\R^{{\infty}}$ and L\'evy processes on $\D[0,1]$   \citep{LindskogResnickRoy2014:probSurveys} has been conducted under the name \emph{hidden regular variation}. Connections between hidden regular variation and elements of the classical large deviations framework have been established recently in \cite{rhee2016:samplePathLDP}.

In this paper, our first contribution is to extend the large deviation principle in \eqref{eqn_ld_principle_hult_et_al} to \emph{hidden large deviations} in the spirit of hidden regular variation. We establish that the most probable way a large deviation event occurs, which is not the result of only one random variable being large, is actually when two random variables are large; resulting in a non-null limit measure as in \eqref{eqn_ld_principle_hult_et_al} concentrating on processes having two jump discontinuities. For our analysis we use the framework proposed in \cite{LindskogResnickRoy2014:probSurveys} and  the notion of convergence used here is known as  $\MO$-convergence which is closely related to the $w^{\#}$-convergence of boundedly finite measures and developed in \cite{hult:lindskog:2006a,das2013multidimEdge,LindskogResnickRoy2014:probSurveys}.  We briefly recall the required background on regular variation and $\MO$-convergence  in Section \ref{sec:background}. The results on hidden large deviations of random walks are dealt with in Section \ref{sec:hld}, where the key result is obtained in Theorem \ref{thm_large_dev}.

Queues with heavy-tailed service times have been of interest to researchers for a few years \citep{boxma:cohen:1999,jelenkovic1999:lossSubexp,zwart2001:bptail}. Our interest lies in figuring out when do we see long busy or intense periods in a queue.  In \cite{zwart2001:bptail}, the author shows that for a GI/G/1 queue with heavy-tailed service times, the most likely way a long busy period occurs is when one big service requirement arrives at the beginning of the busy period and the queue drifts back to zero linearly thereafter. Consequently, a large deviation of a queueing process also  looks exactly the same. 
\cite{jelenkovic1999:lossSubexp} studies the steady state loss in buffered queues and shows that for large buffers $K$ the steady state loss can be approximated by the expected loss due to one arrival $A$ filling the buffer completely starting zero, that is the expected loss is approximately $\E[A-K]$; see \cite{zwart2000} for similar results concerning fluid queues.

Equipped with a \emph{hidden large deviation principle} for random walks, in Section \ref{sec:queues} we study queuing processes with heavy-tailed service times and finite capacity, which is a natural model to assume in many contexts. We define a \emph{long intense period} as the fraction of time a queue with buffer capacity {$K>0$} spends continuously above a level $\theta K, \; \theta \in (0,1)$ for one sojourn and study the length of the longest such period for a given observation horizon.
A closely related notion of \emph{long strange segments}  \citep{mansfield2001} has been consequently investigated in \cite[Section 4]{hult2005}, which examines the length of time the average process value spends in an unexpected regime. Considering hidden large deviations in such a setting  provides more insight since we observe that the first large deviation approximation gives only a crude estimate of the distribution of the length of intense periods for large buffers.  
In Theorem \ref{thm:longintense} we derive an approximation to the distribution of the length of long intense periods in queues with large buffer sizes and conduct a simulation study in Section \ref{sec:simul} to show the effectiveness of the result. Finally, future directions are indicated and conclusions drawn in Section \ref{sec:conclusion}.

\section{Notations and background}
\label{sec:background}
In this section we provide a summary of frequently used notations and concepts along with a review of material necessary for the results in the following sections.  We mostly  adhere to the notations and definitions introduced in \cite{LindskogResnickRoy2014:probSurveys}.

\subsection{Basic notations}\label{subsec:notation}
A few notations and concepts are summarized here. 
Detailed discussions are in the references provided. Unless otherwise specified, capital letters like $X,Z, S$ with various subscripts and superscripts are reserved for real-valued (and sometimes vector-valued) random variables, whereas bold-symboled capital letters like $\bX,\bZ, \bS$ (again with various subscripts and superscripts) denote vector- or function-valued random elements. Small letters in bold, like $\bz$, are vectors in a suitable Euclidean space where $\bz=(z_{1},\ldots,z_{n})$ if $\bz\in \R^{n}$.  
%
%
%
%
%
\begin{longtable}{p{0.17\textwidth} p{0.77\textwidth}}
	$\RV{\beta}$ & 
		Regularly varying functions with index $\beta \in \R$: that is, $f:\mathbb{R}_+\mapsto \mathbb{R}_+$ satisfying $\lim\limits_{t\to\infty}f(tx)/f(t)=x^\beta,$ for $x>0.$  
		We abuse notation and write $X\in \RV{-\alpha}$ for regularly varying random variables as in Definition \ref{def:regvar}. In case of positive random variables, this is equivalent to requiring the tail of the cumulative distribution function to satisfy the limit relation above.
	\\[2mm]
	$\M(\mathbb S \backslash \mathbb C)$ & 
		$\M(\mathbb S, \mathbb C) = \M(\mathbb S \backslash \mathbb C)$ is the set of Borel measures on $\mathbb S \backslash \mathbb C$ that  are finite on sets bounded away from $\mathbb C$.
	\\[2mm]
	$\mu_n\to \mu$ & 
		Convergence in $\M(\mathbb{S}\backslash \mathbb{C})$; see  Definition \ref{def_conv_MO}.
	\\[2mm]
	$U_j^\uparrow$ & 
		$\{\bu \in {[0,1]^j}: \; 0\leq u_1 < \dots < u_j \leq 1 \}$.
	\\[2mm]
	$\D=\D([0,1],\R)$ & 
		Space of all real-valued c\`adl\`ag functions on $[0,1]$ equipped with the Skorohod $J_1$ metric.
	\\[2mm]
	$d_{J_1}$ &
		Metric on $\D$. If $\Lambda$ denotes the class of strictly increasing continuous functions $\lambda: [0,1] \to [0,1]$ with $\lambda(0)=0, \lambda(1)=1$, then for $f,g\in \D$, we define		
{\begin{align}\label{def:J1}
	\begin{split}
		d_{{J_{1}}} (f,g) :=& \inf_{\lambda\in \Lambda} \max \left\{ \sup_{t\in[0,1]} |f(t)-g\circ\lambda (t)|, \sup_{{t\in [0,1]} } |\lambda(t)-t| \right\}\\ 
		=& \inf_{\lambda\in \Lambda} \|f-g\circ\lambda\| \vee \|\lambda-e\|
	\end{split}
\end{align}}where $e(t)=t, \forall t\in [0,1]$. See \cite{billingsley:1999} for further details on the $J_{1}$-topology. Note that similar definitions could be worked out if we take $\D_{M} =\D([0,M],\R)$ in place of $\D$. 
	\\[2mm]
	$\D_{{=j}}$ & 
		Space of all real-valued step functions on [0,1] with exactly $j$ jumps, $j\geq 1$. Assume $\D_{=0}$ is the space containing only the constant function at $0$. Moreover, $\D_{=j}\subset \D$.
	\\[2mm]
	$\D_{{\le j}}$ & 
		Space of all step functions on [0,1] with  $j$ or less jumps, $\D_{{\le j}}=\bigcup\limits_{{k=0}}^{j} \D_{=k}$.
	\\[2mm]
	$\Ceq k (\lambda)$ & 
		$\{ \bz\in \R^n: \; |\{i: |z_i| > \lambda \}| = k\} \quad \text{for} \quad \lambda>0, 1\le k\le n$. It denotes the subset of $\R^{n}$ where exactly $k$ co-ordinates are above $\lambda > 0$ in absolute value. Moreover $\R^{n} = \bigcup\limits_{k=0}^{n } \Ceq k (\lambda)$. See Section \ref{sec:hld}.
	\\[2mm]
	$\nu_{\alpha}^{j}$ & 
		Product measure on $(\R\backslash \{0\})^{j}:\;\underbrace{\nu_{\alpha}\times \ldots \times   \nu_{\alpha}}_{j\; \text{times}} $ with $\nu_{\alpha}$ as defined in \eqref{def:nualpha}.
\end{longtable}


\subsection{Convergence in $\MO$}

We state our results as convergence in $\MO$, a mode of convergence closely related to standard weak convergence of probability measures. 
The idea is as follows: to allow for hidden large deviations we need to exclude the set of possible large deviations from the space we consider in order to keep the limit measure non-degenerate in that region. This is similar to how large deviations avoid the law of large numbers for zero mean random variates: we need to exclude $0$ from the non-negative real line to obtain a limit measure for $\P{|Z_1 + \cdots + Z_n| \geq nz},\; z> 0$. Convergence in $\MO$ follows the same principle but we allow for the removal of an arbitrary closed set. As a consequence, we can define convergence in c\`adl\`ag spaces where we exclude certain types of step functions which form a closed set in $\D$.

In particular, let $\mathbb S$ be a complete separable metric space, $\mathcal B(\mathbb S)$ the collection of Borel sets on $\mathbb S$ and $\mathbb C \subset \mathbb S$ a closed subset of $\mathbb S$. Then we denote $\M(\mathbb S, \mathbb C) = \M(\mathbb S \backslash \mathbb C)$ the set of Borel measures on $\mathbb S \backslash \mathbb C$ which are finite on sets bounded away from $\mathbb C$,
that is, the collection of  sets $A \in \mathcal B(\mathbb S)$ such that $\inf \{ d(x,y): x \in \mathbb C, \; y \in A\}>0$ where $d$ denotes the metric on $\mathbb S$. 
Finally, we call a sequence $\{ \mu_n \}_{n\geq 0} $ convergent if the assigned values converge for a suitable class of test functions or sets. Denoting $\mathbb O = \mathbb S \backslash \mathbb C$ the support set we use the notation $\MO := \M(\mathbb S, \mathbb C)$ as eponym for the mode of convergence and use the following definition of  $\MO$-convergence.
\begin{definition}\label{def_conv_MO}
A sequence of measures $\{\mu_n\}_{n\geq 0} \subset \MO$ converges to  $\mu_0 \in \MO$ if for all closed sets $F$ and open sets $G$ in $\mathcal{B}(\bfS)$ which are bounded away from $\mathbb C$ we have 
\begin{align*}
	\limsup_{n\to \infty} \mu_n (F)  \leq \mu_0(F), \\
	\liminf_{n \to \infty} \mu_n(G) \geq \mu_0(G).
\end{align*}
We write $\mu_n \to \mu_0$ in $\MO$ as $\; n \to \infty$, or simply $\mu_n \to \mu_0$.
\end{definition}

The definition above only states $\MO$-convergence in terms of open and closed subsets of $\mathbb O$. Theorem 2.1 of \cite{LindskogResnickRoy2014:probSurveys} provides several alternative characterizations of convergence in $\MO$. The corresponding version of a continuous mapping theorem in $\MO$ follows as Theorem 2.3 in the same publication. We state the continuous mapping theorem again for the sake of convenience. We denote a second space with the same properties as $\mathbb S$ by $\mathbb S'$ and similarly add dashes to the corresponding elements of the second space.

\begin{theorem}[\cite{LindskogResnickRoy2014:probSurveys}, Theorem 2.3]
	\label{thm:cont_map}
	Denote $h: \mathbb S \to \mathbb S'$ a measurable mapping such that for all sets $A' \in \mathcal B(\mathbb S') \cap h(\mathbb S \backslash \mathbb C)$ bounded away from $\mathbb C'$ also $h^{-1}(A')$ is bounded away from $\mathbb C$. Then $\mu_n \to \mu$ in $\M(\mathbb S, \mathbb C)$ implies $\mu_n\circ h^{-1} \to \mu \circ h^{-1}$ in $\M(\mathbb S',\mathbb C')$ if the measure $\mu$ attains zero mass on the set of discontinuity points of $h$.
\end{theorem}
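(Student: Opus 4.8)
\emph{Proof proposal.} The plan is to run the classical continuous mapping argument inside the $\MO$ framework, using the Portmanteau-type description of $\MO$-convergence from Definition \ref{def_conv_MO}. Write $D_h \subset \mathbb S$ for the set of points at which $h$ fails to be continuous; a standard oscillation argument shows $D_h$ is an $F_\sigma$ set, hence Borel, so that the hypothesis $\mu(D_h)=0$ is meaningful. The first step is to check that the push-forwards $\mu_n \circ h^{-1}$ and $\mu \circ h^{-1}$ are genuine elements of $\M(\mathbb S', \mathbb C')$: if $A' \subset \mathbb S'$ is Borel and bounded away from $\mathbb C'$, then $A' \cap h(\mathbb S \backslash \mathbb C)$ lies in the trace $\sigma$-algebra $\mathcal B(\mathbb S') \cap h(\mathbb S \backslash \mathbb C)$ and is bounded away from $\mathbb C'$, so by the assumption on $h$ the set $h^{-1}(A') \cap (\mathbb S \backslash \mathbb C)$ is bounded away from $\mathbb C$; since $\mu_n$ and $\mu$ are finite on such sets, $(\mu_n \circ h^{-1})(A')$ and $(\mu\circ h^{-1})(A')$ are finite. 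It then remains to verify the two inequalities of Definition \ref{def_conv_MO} for the pushed-forward measures.

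For a closed set $F' \subset \mathbb S'$ bounded away from $\mathbb C'$, the preimage $h^{-1}(F')$ need not be closed since $h$ is only measurable, so I would pass to its closure. By the assumption on $h$, $h^{-1}(F')$ is bounded away from $\mathbb C$, and because $x \mapsto d(x,\mathbb C)$ is continuous the closure $\overline{h^{-1}(F')}$ is bounded away from $\mathbb C$ as well, hence a legitimate test set. Moreover, if $x \in \overline{h^{-1}(F')} \backslash D_h$, continuity of $h$ at $x$ together with closedness of $F'$ forces $h(x) \in F'$, i.e. $x \in h^{-1}(F')$; thus $\overline{h^{-1}(F')} \subset h^{-1}(F') \cup D_h$. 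Applying the closed-set half of Definition \ref{def_conv_MO} to $\overline{h^{-1}(F')}$ and then using $\mu(D_h)=0$ gives
\begin{align*}
\limsup_{n\to\infty}(\mu_n \circ h^{-1})(F')
 &\le \limsup_{n\to\infty}\mu_n\bigl(\overline{h^{-1}(F')}\bigr)
   \le \mu\bigl(\overline{h^{-1}(F')}\bigr)\\
 &\le \mu\bigl(h^{-1}(F')\cup D_h\bigr)=\mu\bigl(h^{-1}(F')\bigr)=(\mu\circ h^{-1})(F').
\end{align*}

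The open-set inequality is dual: for $G' \subset \mathbb S'$ open and bounded away from $\mathbb C'$, I would replace $h^{-1}(G')$ by its interior. This interior is a subset of $h^{-1}(G')$, hence still bounded away from $\mathbb C$, and if $x \in h^{-1}(G') \backslash D_h$ then continuity of $h$ at $x$ and openness of $G'$ give a neighbourhood of $x$ contained in $h^{-1}(G')$, so that $h^{-1}(G') \backslash \operatorname{int} h^{-1}(G') \subset D_h$. Hence, using the open-set half of Definition \ref{def_conv_MO} and $\mu(D_h)=0$,
\begin{align*}
\liminf_{n\to\infty}(\mu_n \circ h^{-1})(G')
 &\ge \liminf_{n\to\infty}\mu_n\bigl(\operatorname{int} h^{-1}(G')\bigr)
   \ge \mu\bigl(\operatorname{int} h^{-1}(G')\bigr)\\
 &\ge \mu\bigl(h^{-1}(G')\bigr)-\mu(D_h)=(\mu\circ h^{-1})(G').
\end{align*}
Together with the previous display, Definition \ref{def_conv_MO} yields $\mu_n \circ h^{-1} \to \mu \circ h^{-1}$ in $\M(\mathbb S', \mathbb C')$.

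The only genuinely delicate points, and where I expect the real work to sit, are those forced by $h$ being merely measurable rather than continuous: one must know $D_h$ is Borel so that the hypothesis $\mu(D_h)=0$ makes sense, and one must track carefully that replacing $h^{-1}(F')$ or $h^{-1}(G')$ by its closure or interior preserves ``boundedness away from'' the deleted sets. This last point is exactly where the structural assumption on $h$ (preimages of sets bounded away from $\mathbb C'$ are bounded away from $\mathbb C$) enters, both to legitimise all the test sets and to guarantee finiteness of the measures involved; the rest is the verbatim Portmanteau argument.
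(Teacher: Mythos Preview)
The paper does not supply its own proof of this statement: Theorem \ref{thm:cont_map} is quoted verbatim from \cite{LindskogResnickRoy2014:probSurveys} ``for the sake of convenience'' and no argument is given. So there is nothing in the present paper to compare your attempt against.

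That said, your proposal is the standard Portmanteau-style continuous mapping argument, correctly transported to the $\MO$ setting, and it matches in spirit the proof given in the cited reference. The two places that deserve a word of caution are exactly the ones you flag at the end. First, the structural hypothesis in the theorem is phrased only for sets $A'$ lying in $h(\mathbb S\backslash\mathbb C)$; when you pass from an arbitrary closed $F'\subset\mathbb S'$ bounded away from $\mathbb C'$ to its preimage, you implicitly use that $h^{-1}(F')=h^{-1}\bigl(F'\cap h(\mathbb S)\bigr)$ and, restricted to $\mathbb S\backslash\mathbb C$, equals $h^{-1}\bigl(F'\cap h(\mathbb S\backslash\mathbb C)\bigr)$---this is fine, but it is worth making explicit that the measures $\mu_n,\mu$ live on $\mathbb S\backslash\mathbb C$ so only this restriction matters. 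Second, the step ``$\overline{h^{-1}(F')}$ is bounded away from $\mathbb C$ because $d(\cdot,\mathbb C)$ is continuous'' is correct and is the right way to justify that the closure remains a legitimate test set. With those two points made precise, your argument is complete.
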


\paragraph{Relationship between $\MO$-convergence and $w^\#$ convergence}

\cite{hult2005} state their version of a large deviation result for random walks in terms of $w^\#$ convergence, working with boundedly finite measures instead. Specifically they consider the space $\overline \D_0 = (0,\infty] \times \mathbb S_\D$, where $\mathbb S_\D$ is the unit sphere in $\D$. 
The metric on the radius is defined as $d_{(0,\infty]}(x,y):= |1/x - 1/y|$, thus making any set not bounded away from the zero function (in the usual $J_1$ metric) unbounded in the modified space. In turn, one may work with convergence of boundedly finite measures. Unfortunately it is not immediately clear how to extend this theory to allow for the removal of more than just the zero function, whereas convergence in $\MO$ on the contrary is specifically designed for this purpose.

\subsection{Regular variation and heavy-tailed large deviations}
A measurable function \linebreak $f: (0,\infty) \to (0,\infty)$ is regularly varying at infinity with index $\alpha \in \R$ if $\lim_{t\to \infty} {f(tx)}/{f(t)} = x^{\alpha}$ for all $x>0$. A sequence of positive numbers $\{a_{n}\}_{n\ge 1} $ is regularly varying with index $\alpha \in \R$ if $\lim_{{n\to\infty}} {a_{[cn]}}/{a_{n}} = c^{{\alpha}}$ for all $c>0$.
Regular variation of unbounded random variables thus usually is defined in terms of regular variation of the tail of the corresponding cumulative distribution functions at infinity; see \cite{bingham:goldie:teugels:1989}, \cite{FerreiraDeHaan2006:extreme}, or \cite{Resnick2007:htp} for related properties and examples.   We work with an equivalent definition stated in terms of $\MO$-convergence \cite[Section 3.2]{LindskogResnickRoy2014:probSurveys}.
\begin{definition}\label{def:regvar}
A random variable $X$ is  \emph{regularly varying} at infinity if there exists a regularly varying sequence $\gamma_n$ and a non-zero measure $\mu \in \M(\R\backslash\{0\})$ such that
\begin{align*}
	\gamma_n \P{X/n \in \cdot} \to \mu(\cdot), \quad n\to \infty,
\end{align*}
in $\M(\R\backslash\{0\})$. 
\end{definition}
Since the measure $\mu$ satisfies the scaling property $\mu(sA) = s^{-\alpha} \mu(A),\; s>0, A \in \R\backslash\{0\}$ for some {$\alpha \ge 0$} we write $X\in \RV{-\alpha}$.  For this paper we assume $\alpha>0$.  Moreover, for $X\in \RV{-\alpha}$, we also assume that the following condition is satisfied.
\[ \lim_{n\to \infty} \frac{\P{X>n}}{\P{|X|>n}} = p, \quad  \lim_{n\to \infty} \frac{\P{X<-n}}{\P{|X|>n}} = 1-p:=q.\]
for some $0\le p \le 1$. This is called the \emph{tail balance condition}. For univariate $X\in\RV{-\alpha}$ we stick to this choice unless otherwise specified. We also denote by $\nu_{\alpha}$, the following measure on $\R\backslash \{0\}$ for $x>0,y>0$,
\begin{align}\label{def:nualpha}
 \nu_{\alpha}((-\infty,-y) \cup (x,\infty)) = qy^{{-\alpha}} + px^{-\alpha}.
 \end{align}
The limit measure $\mu$ in Definition \ref{def:regvar} is equal to $\nu_{\alpha}$ if we choose the sequence $\gamma_n$ to be $ [\P{|X| > n}]^{{-1}}$.
Now, a heavy-tailed large deviation principle for real-valued random variables can be defined as follows in terms of $\MO$-convergence.
\begin{definition}\label{def:ldp}
A sequence of random variables $\{X_n\}_{{n\ge 1}}$, with $X_n \to 0$ in probability, satisfies a \emph{heavy-tailed large deviation principle} if there exists a positive sequence $\gamma_n \uparrow \infty $ and a non-zero measure $\mu \in \M(\R\backslash\{0\})$ such that as $n\to \infty$
\begin{align*}
	\gamma_n \P{X_n \in \cdot} \to \mu(\cdot),
\end{align*}
in $\M(\R\backslash\{0\})$.  
\end{definition}

\begin{remark}
The similarity between the definitions of regular variation and heavy-tailed large deviation principle (LDP) is quite evident here. One salient difference is that regular variation is defined for a single random element, whereas an LDP, for a sequence of random elements. The special case of $X_n = X/n$ shows that regular variation is a specific form of heavy tailed LDP according to our definition.
The definition of an LDP implies that $\P{X_n \in \cdot} \to 0$ as $n\to \infty$ for all sets in $\R\backslash\{0\}$.
\end{remark}

For our purposes we give a more general definition of LDPs for random elements on a metric space $\mathbb S$. Additionally we do no longer restrict to removing the zero element, but an arbitrary closed set $\mathbb C \subset \mathbb S$.

\begin{definition}\label{def:proc_ldp}
The random elements $\{\bX_n\}_{{n\ge 1}} \subset \mathbb S$ satisfy a \emph{(heavy-tailed) large deviation principle} on $\mathbb S \backslash \mathbb C$ for a closed set $\mathbb C \subset \mathbb S$ if there exists a positive sequence $\gamma_n \uparrow \infty $, and a non-zero measure $\mu \in \M(\mathbb S \backslash \mathbb C)$ such that as $n\to \infty$
\begin{align*}
	\gamma_n \P{\bX_{n} \in \; \cdot\;} \to \mu(\cdot),
\end{align*}
in $\M(\mathbb S \backslash \mathbb C)$.  We write $\bX_{n} \in \mathrm{LD}( \gamma_{n},\mu,\mathbb S \backslash \mathbb C)$.
\end{definition}

The definition of heavy-tailed LDP as given in \cite[Definition 1.3]{hult2005} is equivalent to Definition \ref{def:proc_ldp} for stochastic processes with sample paths in $\D$. It has been observed, especially in the case of heavy-tailed random walks, that the limit measure $\mu$ obtained in the LDP, concentrates only on step functions with one jump; see \citep[Theorem 2.1]{hult2005}. Hence we may ask whether a different structure is observable if  we do not allow one jump functions to be in the support of the limit  measure for an LDP. Essentially, we are asking how often do we see events which are not governed by one jump in the space $\D$. 
The same question can be asked iteratively by removing the support set of a new found limit measure and examining the hidden structure of rarer and rarer events.
Hence a \emph{sequence of large deviation principles} can be defined here.

\begin{definition}\label{def:proc_hldp}
The random elements $\{\bX_n\}_{{n\ge 1}} \subset \mathbb S$ satisfy a \emph{sequence of (heavy-tailed) large deviation principles} if there exists an increasing sequence $\{\mathbb C^{(j)}\}_{j\geq 1}$ of closed subsets of $\mathbb S$ (i.e. $\mathbb C^{(k)} \supset \mathbb C^{(j)}$ for $k>j\geq 1$), positive sequences $\gamma_n^{(j)} \uparrow_n \infty$, $j\geq 1$ with $\gamma_n^{(j+1)} / \gamma_n^{(j)} \to_n \infty$,  and non-zero measures $\mu^{(j)} \in \M(\mathbb S \backslash \mathbb C^{(j)}), j \geq 1$ such that 
\begin{align*}
	\bX_n \in \mathrm{LD}(\gamma_n^{(j)},\mu^{(j)},\mathbb S \backslash \mathbb C^{(j)}), \quad j\geq 1.
\end{align*}
\end{definition}
A similar definition could be stated for random elements satisfying only a finite number $J\geq j \geq 1$ of large deviation principles.


\begin{remark}
	The limit measure $\mu^{(j)}$ necessarily satisfies $\mu^{(j)}\left( (\mathbb C^{(j+1)})^c \right)=0$ for all $j\geq 1$. More precisely, the measure concentrates on $\mathbb C^{(j+1)} \backslash \mathbb C^{(j)}$. Thus, the $k\thenum$ level LDP uncovers the structure of rare events which were \emph{hidden} (i.e. negligible) under the scaling of the preceding $j\thenum$ level LDPs of the sequence with $j<k$. 
\end{remark}

\section{Hidden large deviations and random walks}
\label{sec:hld}
Equipped with the terminology and tools in Section \ref{sec:background}, we proceed to understand the structure of heavy-tailed random walks in this section.  We look at heavy-tailed random walks as elements of $\D$. The key result for hidden LDPs for heavy-tailed random walks is in Theorem \ref{thm_large_dev}.

\subsection{Bounds on sums of random variables}
For random variables $Z_{1}, \ldots, Z_{n}$, denote their sum by $S_{n}= Z_{1}+\ldots+Z_{n}$. Here $S_{n}$ denotes the $n$-th step of a random walk. One of the key tools to bound movements in the random walk caused by ``small'' realizations will be Bernstein's inequality, see \cite{bennett1962}.
\begin{lemma}[Bernstein's inequality]
\label{lem_bernstein}
Let $Z_1, \dots , Z_n$ be iid bounded random variables with zero mean, variance $\var{Z_1}=\sigma^2$ and  $|Z_1|\leq M$. Then for any $t>0$,
\begin{align*}
	\mathrm{P}(\left|S_{n} \right| \geq t) < 2\exp\left\{- \frac{ t^2}{ 2n\sigma^2 + \frac{2}{3}Mt} \right\}.
\end{align*}
\end{lemma}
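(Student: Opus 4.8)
The plan is to follow the classical exponential-moment (Chernoff) argument, since $Z_1,\dots,Z_n$ are iid, centered and bounded. First I would fix $\lambda>0$ and apply Markov's inequality to $e^{\lambda S_n}$, using independence to obtain
\[
	\P{S_n \geq t} \leq e^{-\lambda t}\left(\bE{e^{\lambda Z_1}}\right)^n .
\]
Everything then reduces to a sharp bound on the moment generating function of a single summand.

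For that bound I would expand $\bE{e^{\lambda Z_1}} = 1 + \sum_{k\geq 2}\frac{\lambda^k\,\bE{Z_1^k}}{k!}$, where the linear term drops out because $\E Z_1=0$. Using $|Z_1|\leq M$ gives $|\bE{Z_1^k}|\leq M^{k-2}\bE{Z_1^2}=M^{k-2}\sigma^2$ for $k\geq 2$, and the elementary inequality $k!\geq 2\cdot 3^{k-2}$ (easily checked by induction for $k\geq 2$) lets me sum the resulting geometric series: for $0<\lambda<3/M$,
\[
	\bE{e^{\lambda Z_1}} \leq 1 + \frac{\sigma^2\lambda^2/2}{1-\lambda M/3} \leq \exp\!\left(\frac{\sigma^2\lambda^2/2}{1-\lambda M/3}\right),
\]
the last step by $1+x\leq e^x$. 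Substituting back, $\P{S_n\geq t}\leq \exp\!\left(-\lambda t + \dfrac{n\sigma^2\lambda^2/2}{1-\lambda M/3}\right)$.

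It then remains to choose $\lambda$. Rather than optimize exactly (which is messy), I would take $\lambda = t/(n\sigma^2 + Mt/3)$, which lies in $(0,3/M)$ for every $t>0$; this makes $1-\lambda M/3 = n\sigma^2/(n\sigma^2+Mt/3)$, and a short algebraic simplification collapses the exponent exactly to $-\lambda t/2 = -t^2/(2n\sigma^2+\tfrac23 Mt)$. This yields the one-sided bound $\P{S_n\geq t}\leq \exp\!\left(-\dfrac{t^2}{2n\sigma^2+\frac23 Mt}\right)$; running the identical argument for $-Z_1,\dots,-Z_n$ and adding the two tails produces the stated two-sided inequality with the prefactor $2$. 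The strict inequality is recovered from the fact that the Markov step is strict unless $e^{\lambda S_n}$ is a.s. constant, which is ruled out in the situations where the lemma is applied (alternatively one simply keeps $\leq$, which suffices for all later uses).

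I do not anticipate a genuine obstacle here, as this is a textbook estimate; the only places demanding care are the constant bookkeeping — verifying $k!\geq 2\cdot 3^{k-2}$, checking that the chosen $\lambda$ stays strictly below $3/M$ so the geometric series converges, and confirming that the final simplification of the exponent is an exact identity rather than merely an inequality so that the constants $2n\sigma^2$ and $\tfrac23 M t$ come out as claimed.
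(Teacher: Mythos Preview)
Your argument is correct and is precisely the classical Chernoff--Bennett derivation of Bernstein's inequality. The paper, however, does not prove this lemma at all: it merely states the inequality and cites \cite{bennett1962} as the source, treating it as a standard tool. So there is no ``paper's proof'' to compare against --- your proposal simply supplies the textbook proof that the authors chose to omit. The only cosmetic point is the strict inequality: the Chernoff argument naturally gives $\leq$, and the paper's subsequent uses of the lemma (Lemma~\ref{lem_small_dev} and Corollary~\ref{cor_small_dev}) would be unaffected by writing $\leq$ instead of $<$, so your parenthetical remark handles this adequately.
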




We use this exponential bound on the absolute value of the sum to bound the probability of a large deviation in the sum of regularly varying random variables happening due to many variables attaining a small but non-negligible value. This bound, as we see hence,  turns out to be exponential rather than polynomial in the deviation level $\lambda_{n}$. 

\begin{lemma}\label{lem_small_dev}
Let $\{Z_i\}_{i=1}^\infty$ be a sequence of iid random variables with $Z_1 \in \mathcal{RV}_{-\alpha},\; \alpha >0$. In case the expectation exists, we assume it to be zero.
Denote $S_n= \sum_{k=1}^n Z_k$ and let $\lambda_n \in \mathcal{RV}_\rho$ be a regularly varying sequence such that in case
\begin{align*}
	&\var{Z_1}<\infty, \quad \text{we have} \quad \rho > \frac{1}{2}   \\ 
	&\var{Z_1}=\infty, \quad \text{we have} \quad \alpha\rho >1.		   
\end{align*}

Then for any $\delta>0$  and $\varepsilon_0>0$ small enough, there exists a constant $c>0$ such that for large enough $n$,
\begin{align*}
	\P{|S_n| > \delta \lambda_n, |Z_i| \leq \lambda_n^{1-\varepsilon_0},\; \forall\; i\leq n } < 2\exp(-c\lambda_n^{\varepsilon_0} ).
\end{align*}
\end{lemma}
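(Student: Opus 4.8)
The plan is to truncate each summand at the level $t_n := \lambda_n^{1-\varepsilon_0}$, center, and apply Bernstein's inequality (Lemma~\ref{lem_bernstein}) to the centered truncated sum. Everything then reduces to two regular-variation estimates: that the truncated variance is $o(\lambda_n^{2-\varepsilon_0})$ after multiplication by $n$, and that the centering bias, again multiplied by $n$, is $o(\lambda_n)$. It is precisely in these two estimates that the hypotheses $\rho > \tfrac12$ (finite variance) and $\alpha\rho > 1$ (infinite variance) are used.

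Concretely, set $\tilde Z_i := Z_i \ind{|Z_i|\le t_n}$. On the event $\{|Z_i|\le t_n \text{ for all } i\le n\}$ we have $S_n = \sum_{i=1}^n \tilde Z_i$, so
\begin{align*}
  \P{|S_n| > \delta\lambda_n,\; |Z_i|\le t_n\ \forall\, i\le n} \;\le\; \P{\Bigl|\sum_{i=1}^n \tilde Z_i\Bigr| > \delta\lambda_n}.
\end{align*}
Let $m_n := \E[\tilde Z_1]$, which is finite since $|\tilde Z_1|\le t_n$, and put $W_i := \tilde Z_i - m_n$. The $W_i$ are iid, mean zero, bounded by $|W_i|\le t_n + |m_n| \le 2t_n$ for $n$ large, with $\var{W_1} \le \E[\tilde Z_1^2] =: s_n^2$. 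Provided $n|m_n| \le \tfrac{\delta}{2}\lambda_n$ for $n$ large (step (i) below), $\{|\sum_i \tilde Z_i| > \delta\lambda_n\} \subseteq \{|\sum_i W_i| > \tfrac{\delta}{2}\lambda_n\}$, and Lemma~\ref{lem_bernstein} yields
\begin{align*}
  \P{\Bigl|\sum_{i=1}^n W_i\Bigr| > \tfrac{\delta}{2}\lambda_n} \;<\; 2\exp\left\{ -\frac{\delta^2\lambda_n^2/4}{\,2n s_n^2 + \tfrac{2\delta}{3}\,t_n\lambda_n\,} \right\}.
\end{align*}
Since $t_n\lambda_n = \lambda_n^{2-\varepsilon_0}$, once we know $n s_n^2 = o(\lambda_n^{2-\varepsilon_0})$ (step (ii) below), the denominator is at most $\tfrac{4\delta}{3}\lambda_n^{2-\varepsilon_0}$ for large $n$, whence the exponent is at least $\tfrac{3\delta}{16}\lambda_n^{\varepsilon_0}$; this gives the claim with $c = 3\delta/16$.

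It remains to establish, for $\varepsilon_0>0$ small, that (i) $n|m_n| = o(\lambda_n)$ and (ii) $n s_n^2 = o(\lambda_n^{2-\varepsilon_0})$. For (ii) when $\var{Z_1}<\infty$ one uses $s_n^2 \le \var{Z_1}$ together with $\lambda_n^{2-\varepsilon_0}\in\RV{\rho(2-\varepsilon_0)}$ and $\rho(2-\varepsilon_0)>1$, which holds once $\varepsilon_0 < 2-1/\rho$ (a nonempty range since $\rho>\tfrac12$), so that $n/\lambda_n^{2-\varepsilon_0}\to 0$. For (ii) with $\var{Z_1}=\infty$, and for (i) in general, one uses that $Z_1\in\RV{-\alpha}$ implies $\E[|Z_1|^\beta]<\infty$ for every $\beta<\alpha$: factoring a power of $t_n$ out of the truncated moments (or invoking Karamata's theorem) gives $s_n^2 \le C\,t_n^{2-\beta}$ and, using the zero-mean assumption to write $m_n = -\E[Z_1\ind{|Z_1|>t_n}]$ when $\alpha>1$ and bounding $|m_n|\le \E[|Z_1|\ind{|Z_1|\le t_n}]$ when the mean fails to exist, $|m_n|\le C\,t_n^{1-\beta}$ for appropriate $\beta<\alpha$. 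Then $n s_n^2$ and $n|m_n|$ are regularly varying in $n$, and a short computation shows that both $n s_n^2 = o(\lambda_n^{2-\varepsilon_0})$ and $n|m_n| = o(\lambda_n)$ hold precisely when $\rho\bigl(\beta + \varepsilon_0(1-\beta)\bigr) > 1$; since this quantity tends to $\rho\alpha>1$ as $\beta\uparrow\alpha$ and $\varepsilon_0\downarrow 0$, it is satisfied for $\beta$ close to $\alpha$ and $\varepsilon_0$ small. The boundary case $\alpha=1$ with finite mean is handled in the same way, after noting that $\E[|Z_1|\ind{|Z_1|>t_n}]$ is then slowly varying and vanishing and that $\alpha\rho>1$ reads $\rho>1$.

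I expect the main obstacle to be exactly this last bookkeeping: controlling the centering term $m_n$ when $\E[Z_1]$ does not exist (so one cannot use the cancellation $m_n = -\E[Z_1\ind{|Z_1|>t_n}]$), and checking that the indices of $n s_n^2$ and $n|m_n|$ fall below those of $\lambda_n^{2-\varepsilon_0}$ and $\lambda_n$ for every $\varepsilon_0$ in a nonempty interval $(0,\varepsilon_0^*)$ — this is the step where the assumptions $\rho>\tfrac12$ and $\alpha\rho>1$ are seen to be sharp. The truncation-plus-Bernstein skeleton itself is routine.
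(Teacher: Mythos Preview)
Your proposal is correct and follows essentially the same route as the paper: truncate at $\lambda_n^{1-\varepsilon_0}$, center, apply Bernstein, and then verify that $n|m_n|=o(\lambda_n)$ and $n s_n^2=o(\lambda_n^{2-\varepsilon_0})$ using regular-variation estimates. The only cosmetic difference is that the paper invokes Karamata's theorem directly for the truncated second moment in the infinite-variance case, whereas you use the elementary bound $\E[|Z_1|^\beta]<\infty$ for $\beta<\alpha$ and factor out a power of $t_n$; your case analysis for the centering term $m_n$ is in fact slightly more explicit than the paper's.
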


\begin{remark}
When $Z_{1}$ has finite variance, the condition, $\rho > 1/2$, guarantees that $\lambda_n\uparrow\infty$ fast enough such that we avoid the central limit regime. When $Z_{1}$ has infinite variance, then $\alpha\rho>1$ ensures that the probability of at least one of the variables exceeding a large threshold on the scale of $\lambda_n$ still tends to zero.
\end{remark}

\begin{proof}[Proof of Lemma 3.2]
We transform $S_n$ by making the $Z_{i}$'s bounded  (the bound still depending on $n$);  and then apply Lemma \ref{lem_bernstein}  appropriately to obtain the result.
%
%
First observe that given $\delta>0$, for small enough $\varepsilon_{0}$ and large enough $n$,
\begin{align*}
	\P {A_{n}} &:= \P{|S_n| > \delta \lambda_n, |Z_i| \leq \lambda_n^{1-\varepsilon_0} \forall 1\leq i\leq n } \\
	&\leq \P{\left| \sum_{i=1}^n Z_i \ind{|Z_i|\leq \lambda_n^{1-\varepsilon_0}} \right| > \delta\lambda_n} \\
	&\leq \P{\left|  \sum_{i=1}^n \left(Z_i \ind{|Z_i|\leq \lambda_n^{1-\varepsilon_0}} - \bE{Z_i  \ind{|Z_i|\leq \lambda_n^{1-\varepsilon_0}}}\right)\right| > \frac{\delta}{2} \lambda_n },
\end{align*}
where in the last inequality above we used the following bound, valid for large enough $n$ and some constant $c>0$.
\begin{align*}
	\left| \frac{n}{\lambda_n}\bE{Z_1  \ind{|Z_1|\leq \lambda_n^{1-\varepsilon_0}}} \right| &\leq \frac{n}{\lambda_n}\bE{|Z_1|  \ind{|Z_1|\leq \lambda_n^{1-\varepsilon_0}}}   \\
    & \leq \frac{n}{\lambda_n}\bE{|Z_1|  \ind{|Z_1|\leq \lambda_n}}   \\
    &\leq c n \P{|Z_1|>\lambda_n}.
\end{align*}

Now using  Lemma \ref{lem_bernstein} (Bernstein's inequality) to bound the sum of $n$ zero mean random variables bounded in absolute value by $M = 2 \lambda_n^{1-\varepsilon_0}$, we obtain that for large enough $n$
\begin{align*}
	\P{A_{n}} &\leq 2 \exp\left( - \frac{(\frac{\delta}{2}\lambda_n)^2}{2n\var{Z_1  \ind{|Z_1|\leq \lambda_n^{1-\varepsilon_0}}} + \frac{4}{3}\lambda_n^{1-\varepsilon_0}\frac{\delta}{2}\lambda_n } \right)          \\
	&\leq 2\exp\left( -\lambda_n^{\varepsilon_0} \frac{c_1}{c_2 + \beta(n)}\right),
\end{align*}
where $c_1,c_2$ are positive constants and $$\beta(n) =  \frac{ 2n\var{Z_1  \ind{|Z_1|\leq \lambda_n^{1-\varepsilon_0}}} }{\lambda_n^{2-\varepsilon_0}}.$$
Next we show that $\beta(n)\to 0$ as $n\to \infty$ which will imply that for large enough $n$, there is a $\zeta>0$ such that
\[\P{A_{n}} \le 2\exp\left( -\lambda_n^{\varepsilon_0} \frac{c_1}{c_2 + \zeta}\right) =2\exp\left( -c\lambda_n^{\varepsilon_0}\right), \]
where $c=\frac{c_{1}}{c_{2}+\zeta}$, and thus the lemma is proven.

We show $\beta(n) \to 0$ for three different cases as follows.
\begin{enumerate}
	\item If $\alpha \in (0,2)$ (implying infinite variance and $\alpha\rho>1$), using Karamata's theorem \citep[Proposition 1.5.8]{bingham:goldie:teugels:1989}  for small enough $\varepsilon_{0}$, large enough $n$ and constant $C>0$ we have
	\begin{align*}
	 \beta(n) & \leq \frac{2n}{\lambda_{n}^{2-\varepsilon_{0}}} \E\left[ Z_1^{2}  \ind{|Z_1|\leq \lambda_n^{1-\varepsilon_0}}\right] \\
	               & \sim \frac{2n}{\lambda_{n}^{2-\varepsilon_{0}}} \times C \lambda_{n}^{2(1-\varepsilon_{0})} \P{|Z_1|>\lambda_n^{1-\varepsilon_0}}\\ 
	               & \sim 2 C n\lambda_{n}^{-\varepsilon_{0}} \P{|Z_1|>\lambda_n^{1-\varepsilon_0}} \to 0 \quad (n\to \infty).
	 \end{align*}
	\item If $\var{Z_1} < \infty$, then $\beta(n)\leq C\frac{n}{\lambda_n^{2-\varepsilon_0}}$ for some $C>0$ and hence vanishes as $n \to \infty$ for small enough $\varepsilon_0>0$.
	\item If $\alpha =2$ and $\var{Z_1}=\infty$, then the variance is a slowly varying function. Again, for $\varepsilon_0$ small enough $n\lambda_n^{-2+\varepsilon_0} \to 0$ at a polynomial rate and hence $\beta(n)\to 0$.
\end{enumerate}
\end{proof}
%
%
%
%
%
%
%
%
%
%
%

\subsection{Random walks embedded in $\D[0,1]$} We embed the random walk $S_n=Z_{1}+\ldots+Z_{n}$ in $\D=\D[0,1]$ and discuss its large deviations. Let
 $\bZ^{(n)} = (Z_1,\dots,Z_n)$ where $Z_{i}$'s are iid  realizations from $Z_1$. For $t\in [0,1]$ and $k \in \{1,\dots,n\}$ define the functions $\bX_k^{(n)}(t) = Z_k\ind{\frac{k}{n} \leq t}$ on $\D$. Now define $$\bX^{(n)}(t) := \sum_{k=1}^n \bX^{(n)}_k (t) = \sum_{k=1}^{\lfloor nt \rfloor} Z_k =S_{\lfloor nt \rfloor}$$ to be the embedding of the random walk induced by  $Z_1, \ldots, Z_{n}$ into the space $\D$. Moreover let $\Ceq 0 (\lambda) := \{\bz\in\R^{n}: |z_{i}| \le \lambda, \forall i\}$.
The following corollary is a consequence of Lemma \ref{lem_small_dev}.
\begin{corollary}\label{cor_small_dev}
Under the conditions of Lemma \ref{lem_small_dev}, for any $\delta>0$  and $\varepsilon_0>0$ small enough, there exist a constant $c>0$ such that for large enough $n$,
\begin{align*}
	\mathrm{P}\left(\sup_{t\in [0,1]} |\bX^{(n)}| > \delta \lambda_n , \bZ^{(n)} \in \Ceq{0}(\lambda_n^{1-\varepsilon_0})\right) \leq 2\exp(-c\lambda_n^{\varepsilon_0/2})
\end{align*}
\end{corollary}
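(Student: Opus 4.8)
The plan is to turn the supremum over $[0,1]$ into a maximum over the lattice points of the embedded walk, pass to the truncated walk on the event under consideration, recentre, and then apply Bernstein's inequality (Lemma~\ref{lem_bernstein}) term by term after a union bound, recycling the estimates already established in the proof of Lemma~\ref{lem_small_dev}. Since $\bX^{(n)}(t)=S_{\lfloor nt\rfloor}$ is a step function, $\sup_{t\in[0,1]}|\bX^{(n)}(t)|=\max_{0\le k\le n}|S_k|$. On the event $\{\bZ^{(n)}\in\Ceq{0}(\lambda_n^{1-\varepsilon_0})\}$ one has $|Z_i|\le\lambda_n^{1-\varepsilon_0}$ for every $i\le n$, so $S_k$ coincides there with the truncated partial sum $\widetilde S_k:=\sum_{i=1}^k Z_i\ind{|Z_i|\le\lambda_n^{1-\varepsilon_0}}$ for all $k\le n$. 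Hence the probability in the statement is bounded by $\mathrm P\!\left(\max_{0\le k\le n}|\widetilde S_k|>\delta\lambda_n\right)$, and the constraint on $\bZ^{(n)}$ plays no further role.

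Next I recentre using $m_n:=\bE{Z_1\ind{|Z_1|\le\lambda_n^{1-\varepsilon_0}}}$. The centering estimate from the proof of Lemma~\ref{lem_small_dev} gives $n m_n=o(\lambda_n)$, so for $n$ large $n|m_n|<\tfrac\delta2\lambda_n$, and therefore $\max_k|\widetilde S_k|>\delta\lambda_n$ forces $\max_k|\widetilde S_k-km_n|>\tfrac\delta2\lambda_n$. The variables $\{Z_i\ind{|Z_i|\le\lambda_n^{1-\varepsilon_0}}-m_n\}_{i\le n}$ are iid, mean zero, bounded in absolute value by $2\lambda_n^{1-\varepsilon_0}$, with variance at most $\var{Z_1\ind{|Z_1|\le\lambda_n^{1-\varepsilon_0}}}$. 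A union bound over $k\in\{1,\dots,n\}$ followed by Bernstein's inequality applied to each $\widetilde S_k-km_n$ — using $k\le n$ so that the variance term is controlled by $\beta(n)=2n\var{Z_1\ind{|Z_1|\le\lambda_n^{1-\varepsilon_0}}}/\lambda_n^{2-\varepsilon_0}\to0$, which was shown in the proof of Lemma~\ref{lem_small_dev} — then produces, exactly as there, a constant $c_0>0$ with
\begin{equation*}
	\mathrm P\Big(\max_{0\le k\le n}|\widetilde S_k-km_n|>\tfrac\delta2\lambda_n\Big)\le 2n\exp\!\left(-c_0\lambda_n^{\varepsilon_0}\right)
\end{equation*}
for all $n$ large.

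Finally, since $\lambda_n\in\RV\rho$ with $\rho>0$ grows at a polynomial rate in $n$, we have $2n\exp(-c_0\lambda_n^{\varepsilon_0})\le 2\exp(-c\lambda_n^{\varepsilon_0/2})$ for any fixed $c<c_0$ and all $n$ large; downgrading the exponent from $\varepsilon_0$ to $\varepsilon_0/2$ is precisely the slack needed to swallow the factor $n$ coming from the union bound. This yields the asserted bound. There is no substantial obstacle here — the analytic work is entirely contained in Lemma~\ref{lem_small_dev}; the only points requiring care are the recentring and making the Bernstein estimate uniform over $k\le n$, plus the purely cosmetic bookkeeping that accounts for the $\varepsilon_0/2$. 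If one prefers to avoid the crude union bound, an Etemadi- or L\'evy-type maximal inequality applied to $\widetilde S_k-km_n$ reduces the maximum to the single endpoint $k=n$ and even preserves the exponent $\varepsilon_0$.
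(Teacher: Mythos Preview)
Your proof is correct and follows essentially the same route as the paper: reduce the supremum to $\max_{1\le k\le n}|S_k|$, apply a union bound over $k$, invoke the Bernstein estimate from (the proof of) Lemma~\ref{lem_small_dev} uniformly in $k\le n$, and then absorb the factor $n$ by halving the exponent. The paper's argument is terser---it cites Lemma~\ref{lem_small_dev} directly rather than unpacking the truncation and recentring---but the substance is identical, and your explicit check that the variance control via $\beta(n)$ is uniform over $k\le n$ is a worthwhile point the paper leaves implicit.
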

\begin{proof} Observe that from Lemma \ref{lem_small_dev}, for any $\delta>0$ and $\varepsilon_{0}>0$ small enough and for large enough $n$,
\begin{align*}
	\P{\sup_{t\in [0,1]} |\bX^{(n)}| > \delta \lambda_n , \bZ^{(n)} \in \Ceq{0}(\lambda_n^{1-\varepsilon_0})} & = 
		\P{ \sup_{t\in [0,1]} |S_{\lfloor nt \rfloor}| > \delta \lambda_n , Z_{i}\le \lambda_n^{1-\varepsilon_0},\; \forall  \; i\le n} \\
	& \le \P{\sup_{1\leq k \leq n} |S_k| > \delta \lambda_n , Z_{i}\le \lambda_n^{1-\varepsilon_0},\; \forall  \; i\le n } \\
	 & \leq 2 n\exp(-c\lambda_n^{\varepsilon_0})\\
	 & \leq 2\exp(-c\lambda_n^{\varepsilon_0/2})
\end{align*}
for some constant $c>0$.
\end{proof}

Now we define functions which relate vectors in $\R^{n}$ to c\`adl\`ag step functions in $\D$. For  integers $j\in \N$, denote $U_j^\uparrow := \{\bu \in {[0,1]^j}: \; 0\leq u_1 < \dots < u_j \leq 1 \}$ and define functions 
\begin{align*}
 	&h_j: \Rjzero \times U_j^\uparrow \to \D, \\
	&h_j((\bz,\bu))(t) :=  \sum_{i=1}^j z_i\ind{u_i\leq t}.
\end{align*}

The maps $h_{j}$ allow us to define the collection of functions with exactly $j$ jumps as subsets of $\D$. Hence we define the following classes of c\`adl\`ag functions.
\begin{align*}
	\D_{=0} &:=\{x\in \D: x(t)=0, 0\le t \le 1\} = \{\text{the zero function in [0,1]} \}, \\
	\D_{=j} &:= \left\{x\in \D: x(t) =  h_{j}(\bz,\bu)(t) , 0\le t\le 1,\quad (\bz,\bu)\in \Rjzero \times U_j^\uparrow) \right\}, \\
	\D_{\leq j} &:= \bigcup_{i=0}^j \D_{=i} = \{\text{c\`adl\`ag step functions with $j$ or less jumps}\}.
\end{align*}

\begin{lemma}\label{lem_h_continuous}
The map $h_j: \Rjzero \times U_j^\uparrow \mapsto \D $  is continuous for $j\in\N$.
\end{lemma}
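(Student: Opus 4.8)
The plan is to show sequential continuity, which suffices since both the domain $\Rjzero \times U_j^\uparrow$ (with the subspace topology inherited from $(\R^j) \times [0,1]^j$) and the target $\D$ with the $J_1$ metric are metrizable. So I would fix a point $(\bz, \bu) \in \Rjzero \times U_j^\uparrow$ and a sequence $(\bz^{(m)}, \bu^{(m)}) \to (\bz, \bu)$ in that space, and prove $d_{J_1}(h_j(\bz^{(m)}, \bu^{(m)}), h_j(\bz, \bu)) \to 0$. Write $f = h_j(\bz, \bu)$ and $f_m = h_j(\bz^{(m)}, \bu^{(m)})$. The key structural fact I would exploit: because $(\bz, \bu)$ lies in $\Rjzero \times U_j^\uparrow$, the limit point has all coordinates $z_i \neq 0$ and all jump locations $u_1 < \cdots < u_j$ strictly ordered (and, for boundary cases, possibly $u_1 = 0$ or $u_j = 1$). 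Hence for $m$ large the approximating data $\bu^{(m)}$ still has strictly ordered coordinates, so $f_m$ genuinely has $j$ jumps at nearby locations with nearby heights.

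The core of the argument is to construct an explicit time-change $\lambda_m \in \Lambda$ that maps, for each $i$, the jump location $u_i^{(m)}$ of $f_m$ to the jump location $u_i$ of $f$ (taking $\lambda_m$ piecewise linear, interpolating between the points $0 \mapsto 0$, $u_i^{(m)} \mapsto u_i$ for $i = 1, \ldots, j$, and $1 \mapsto 1$; some care is needed if $u_1 = 0$ or $u_j = 1$, in which case those knots are omitted). Since $\bu^{(m)} \to \bu$ and the $u_i$ are distinct, for large $m$ these knots are in the correct order and $\lambda_m$ is a well-defined strictly increasing continuous bijection of $[0,1]$, with $\|\lambda_m - e\| = \max_i |u_i^{(m)} - u_i| \to 0$. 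With this choice, $f \circ \lambda_m^{-1}$... actually more cleanly: compare $f$ with $f_m \circ \lambda_m$ — no, I want $\sup_t |f(t) - f_m(\lambda_m(t))|$. By construction $f_m \circ \lambda_m$ is a step function with jumps exactly at $u_1, \ldots, u_j$ (same as $f$), with jump heights $z_i^{(m)}$. So $f(t) - f_m(\lambda_m(t)) = \sum_{i : u_i \le t}(z_i - z_i^{(m)})$, whence $\sup_t |f(t) - f_m(\lambda_m(t))| \le \sum_{i=1}^j |z_i - z_i^{(m)}| \to 0$. Therefore $d_{J_1}(f_m, f) \le \max\{\sum_i |z_i - z_i^{(m)}|, \max_i |u_i^{(m)} - u_i|\} \to 0$, giving continuity.

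The main obstacle — really the only subtlety — is the bookkeeping at the boundary of the parameter space: when one or more $u_i$ equals $0$ or $1$, or when two approximating locations $u_i^{(m)}, u_{i+1}^{(m)}$ coincide for small $m$ (which cannot happen at the limit since $u_i < u_{i+1}$ there, but must be handled for finitely many $m$, or simply ignored since we only need the tail of the sequence). I would note that since $u_1 < u_2 < \cdots < u_j$ strictly at the limit point, there is $\eta > 0$ and $M$ such that for $m \ge M$ the points $0 < u_1^{(m)} < \cdots < u_j^{(m)} < 1$ (or the appropriate variant if $u_1 = 0$ and/or $u_j = 1$) remain $\eta$-separated, so $\lambda_m$ is legitimately in $\Lambda$; the finitely many earlier terms do not affect convergence. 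One further remark worth making: the hypothesis $\bz \in \Rjzero$ (nonzero coordinates) is what makes $h_j$ continuous precisely on this domain — without it, a sequence with $z_i^{(m)} \to 0$ would have $f_m$ with a vanishing jump, which in $\D_{=j}$ one cannot reach, and indeed $h_j$ would fail to extend continuously — but since the lemma only claims continuity on $\Rjzero \times U_j^\uparrow$, this causes no trouble and need not be belabored. I would close by remarking that the argument shows $h_j$ is in fact a homeomorphism onto its image $\D_{=j}$, though only continuity is needed here.
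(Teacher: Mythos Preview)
Your argument is correct and is the standard one: build a piecewise-linear time change aligning the jump locations and bound the sup-norm by the coordinate differences of $\bz$. The paper itself omits the proof entirely, deferring to Lemma~5.3 of \cite{LindskogResnickRoy2014:probSurveys}, so there is nothing to compare against; your write-up in fact supplies more detail than the paper does.

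One small slip worth fixing: with your convention $\lambda_m(u_i^{(m)}) = u_i$, the composition $f_m \circ \lambda_m$ has its jumps at $\lambda_m^{-1}(u_i^{(m)})$, not at $u_i$. You want the reverse direction, $\lambda_m(u_i) = u_i^{(m)}$, so that $f_m \circ \lambda_m$ jumps exactly at the $u_i$; alternatively keep your $\lambda_m$ and compare $f \circ \lambda_m$ with $f_m$. Either way the final bound $\max\{\sum_i |z_i - z_i^{(m)}|,\, \max_i |u_i^{(m)} - u_i|\}$ is unchanged by symmetry of $d_{J_1}$, so this is purely cosmetic. Your remark about the boundary cases $u_1=0$ or $u_j=1$ is apt --- strictly speaking continuity can fail there since $\lambda(0)=0$, $\lambda(1)=1$ are forced in $\Lambda$, but these boundary sets are $\Leb_j$-null and play no role in the application (Theorem~\ref{thm_large_dev}), so the point need not be pressed.
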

\begin{proof} The proof which is similar to Lemma 5.3 in \cite{LindskogResnickRoy2014:probSurveys}  is skipped here.

\end{proof}

\begin{remark}
The function $h_{j}$ maps points in $\Rjzero \times U_j^\uparrow $ to functions in $\D_{=j}\subset \D$, which are c\`adl\`ag functions in $[0,1]$ with exactly $j$ jumps. Hence for any $F\subset \D$ bounded away from $\D_{\leq (j-1)}$, we have $h_{j}^{-1}(F) = h_{j}^{-1}(F\cap\D_{=j})$.  Hence  $h_{j}\circ h_{j}^{-1} (F) = F\cap \D_{=j}.$ \end{remark}


The following  result extends the large deviation result of \cite{hult2005} in the setting of \cite{LindskogResnickRoy2014:probSurveys} in order to obtain what we think of as \emph{hidden large deviations}. The special case of $j=1$ in Theorem \ref{thm_large_dev} corresponds to  \cite[Theorem 2.1]{hult2005}. The Lebesgue measure (in $\R^{j}$) is denoted $\Leb_{j}$ and $\nu_{\alpha}^{j}$ is the $j$-fold product measure of $\nu_{\alpha}$ (again in $\R^j$) with $\nu_{\alpha}$ as defined in \eqref{def:nualpha}.

%
%
%
%
%
%
%
%
%
%

\begin{theorem}\label{thm_large_dev}
Let $j\geq 1$. Under the conditions of Lemma \ref{lem_small_dev} and subsequent notations,  for $\lambda_{n} \to \infty$ as $n\to \infty$,
\begin{align}\label{eq:large_dev}
	\gamma_n^{(j)} \P{\bX^{(n)}/\lambda_n \in \;\cdot\;} \to (\nu_\alpha^j \times \Leb_{j}) \circ h_j^{-1} (\cdot)
\end{align}
in $\mathbb{M}(\D\backslash\{\D_{\leq (j-1)}\})$ as $n\to \infty$, where   $\gamma_n^{(j)} = \left[ n \P{|Z_1|>\lambda_n} \right]^{-j}$.
\end{theorem}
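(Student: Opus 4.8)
The plan is to classify realizations of $\bZ^{(n)}=(Z_1,\dots,Z_n)$ by the number of coordinates exceeding the intermediate level $\lambda_n^{1-\varepsilon_0}$, i.e.\ to split along $\R^n=\bigcup_{k=0}^n\Ceq{k}(\lambda_n^{1-\varepsilon_0})$, and to show that at the scale $\gamma_n^{(j)}=[n\P{|Z_1|>\lambda_n}]^{-j}$ only the event of \emph{exactly} $j$ such coordinates contributes in the limit, the other terms being absorbed either because the remaining truncated increments are too small to leave a neighbourhood of $\D_{\le(j-1)}$, or because they are too rare. First I would record the elementary consequences of the hypotheses of Lemma~\ref{lem_small_dev}: $n\P{|Z_1|>\lambda_n}$ is regularly varying in $n$ with index $1-\alpha\rho<0$ (note $\alpha\rho>1$ in both regimes, using $\alpha\ge 2$ when $\var{Z_1}<\infty$), so $n\P{|Z_1|>\lambda_n}\to0$ and $\gamma_n^{(j)}$ is bounded by a power of $n$; hence, for $\varepsilon_0>0$ small enough (depending on $\alpha,\rho,j$) one also has $n\P{|Z_1|>\lambda_n^{1-\varepsilon_0}}\to0$, $[n\P{|Z_1|>\lambda_n^{1-\varepsilon_0}}]^{\,j+1}\big/[n\P{|Z_1|>\lambda_n}]^{\,j}\to0$, and $\gamma_n^{(j)}\exp(-c\lambda_n^{\varepsilon_0/2})\to0$ for every $c>0$.

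Next I would discard $k\ne j$. On $\Ceq{k}(\lambda_n^{1-\varepsilon_0})$ write $\bX^{(n)}=\bY^{(n)}+\bR^{(n)}$, where $\bY^{(n)}(t)=\sum_{m\le\lfloor nt\rfloor}Z_m\ind{|Z_m|>\lambda_n^{1-\varepsilon_0}}\in\D_{=k}$ is the skeleton of the large jumps and $\bR^{(n)}(t)=\sum_{m\le\lfloor nt\rfloor}Z_m\ind{|Z_m|\le\lambda_n^{1-\varepsilon_0}}$ is the truncated partial-sum process. The argument proving Corollary~\ref{cor_small_dev} in fact bounds exactly this truncated process: $\P{\sup_{t\in[0,1]}|\bR^{(n)}(t)|>\delta\lambda_n}\le 2\exp(-c\lambda_n^{\varepsilon_0/2})$. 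Hence, for $k\le j-1$, outside this exponentially small event $\bX^{(n)}/\lambda_n$ lies within $\delta$ in $d_{J_1}$ of $\bY^{(n)}/\lambda_n\in\D_{\le(j-1)}$; choosing $\delta$ smaller than the distance from the target closed set to $\D_{\le(j-1)}$, the intersection with $\Ceq{k}(\lambda_n^{1-\varepsilon_0})$ contributes, after multiplying by $\gamma_n^{(j)}$ and summing over the at most $n^{\,j-1}$ index choices, a vanishing amount. For $k\ge j+1$, $\P{\Ceq{k}(\lambda_n^{1-\varepsilon_0})}\le\binom nk\P{|Z_1|>\lambda_n^{1-\varepsilon_0}}^k\le C_k\,(n\P{|Z_1|>\lambda_n^{1-\varepsilon_0}})^k$, which times $\gamma_n^{(j)}$ tends to $0$ by the first paragraph.

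The heart of the matter is the term $k=j$. On $\Ceq{j}(\lambda_n^{1-\varepsilon_0})$, with the large coordinates at the ordered indices $i_1<\dots<i_j$, set $\bV^{(n)}:=\big((Z_{i_1},\dots,Z_{i_j})/\lambda_n,\,(i_1/n,\dots,i_j/n)\big)\in\Rjzero\times U_j^\uparrow$ (and send $\bV^{(n)}$ to a cemetery point in the excluded set off this event); then $\bX^{(n)}/\lambda_n=h_j(\bV^{(n)})+\bR^{(n)}/\lambda_n$ with $\|\bR^{(n)}/\lambda_n\|_\infty\le\delta$ outside an exponentially small event, as above. I would then prove the \emph{source} convergence $\gamma_n^{(j)}\,\P{\bV^{(n)}\in\;\cdot\;}\to\nu_\alpha^j\times\Leb_j$ in $\M\big(\R^j\times\{0\le u_1\le\dots\le u_j\le1\}\setminus\bigcup_{l}\{z_l=0\}\big)$, verifying it on test sets $C_1\times\dots\times C_j\times V$ with each $C_l$ a $\nu_\alpha$-continuity set bounded away from $0$ and $V\subset U_j^\uparrow$ Jordan-measurable: by exchangeability and independence the probability equals $\big(\#\{i_1<\dots<i_j:(i_l/n)_l\in V\}\big)\cdot\prod_l\P{Z_1/\lambda_n\in C_l}\cdot(1-\P{|Z_1|>\lambda_n^{1-\varepsilon_0}})^{\,n-j}$, using that for large $n$ the bound-away property of the $C_l$ makes the constraints $|Z_{i_l}|>\lambda_n^{1-\varepsilon_0}$ automatic, and multiplying by $\gamma_n^{(j)}=n^{-j}[\P{|Z_1|>\lambda_n}]^{-j}$ turns this into (a Riemann sum $\to\Leb_j(V)$) $\times$ ($\prod_l[\P{|Z_1|>\lambda_n}]^{-1}\P{Z_1/\lambda_n\in C_l}\to\prod_l\nu_\alpha(C_l)$, by Definition~\ref{def:regvar}, the tail balance condition and $\lambda_n\to\infty$) $\times$ (a factor $\to1$); the extension to arbitrary Borel sets bounded away from $\bigcup_l\{z_l=0\}$ is routine. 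One checks that whenever $A'\subset\D$ is bounded away from $\D_{\le(j-1)}$ the preimage $h_j^{-1}(A')$ is bounded away from $\bigcup_l\{z_l=0\}$ — deleting the $l$-th jump moves a point of $A'$ by exactly $|z_l|$ in sup-norm, hence by at most $|z_l|$ in $d_{J_1}$, so $\min_l|z_l|\ge d_{J_1}(A',\D_{\le(j-1)})>0$ — while the discontinuity set of the (extended) map $h_j$ is contained in the $\Leb_j$-null diagonal $\{u_l=u_{l+1}\text{ for some }l\}$, hence is $(\nu_\alpha^j\times\Leb_j)$-null; Theorem~\ref{thm:cont_map} then gives $\gamma_n^{(j)}\,\P{h_j(\bV^{(n)})\in\;\cdot\;}\to(\nu_\alpha^j\times\Leb_j)\circ h_j^{-1}$ in $\M(\D\backslash\D_{\le(j-1)})$. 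A converging-together argument finishes the proof: for closed $F$ bounded away from $\D_{\le(j-1)}$, combining Step~2 with the control of $\bR^{(n)}/\lambda_n$ gives $\limsup_n\gamma_n^{(j)}\P{\bX^{(n)}/\lambda_n\in F}\le\limsup_n\gamma_n^{(j)}\P{h_j(\bV^{(n)})\in F^\delta}\le(\nu_\alpha^j\times\Leb_j)\circ h_j^{-1}(F^\delta)$, and letting $\delta\downarrow0$ (legitimate since $F^\delta$ remains bounded away from $\D_{\le(j-1)}$, where the limit measure is finite) yields the bound by $(\nu_\alpha^j\times\Leb_j)\circ h_j^{-1}(F)$; the matching lower bound for open $G$ follows symmetrically with $G^{-\delta}$, so \eqref{eq:large_dev} holds in the sense of Definition~\ref{def_conv_MO}.

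I expect the main obstacle to be the simultaneous calibration of the two scales $\lambda_n^{1-\varepsilon_0}$ and $\lambda_n$: $\varepsilon_0$ must be chosen small enough that Corollary~\ref{cor_small_dev} applies \emph{and} both $n\P{|Z_1|>\lambda_n^{1-\varepsilon_0}}\to0$ and the $k\ge j+1$ contribution is $o\big([n\P{|Z_1|>\lambda_n}]^{\,j}\big)$, and one must then check that the only subsets of $\Rjzero\times U_j^\uparrow$ ever tested against the source measure — the $h_j$-preimages of sets bounded away from $\D_{\le(j-1)}$ — are bounded away from the coordinate hyperplanes, which is precisely what renders the truncation at $\lambda_n^{1-\varepsilon_0}$ invisible in the limit. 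Pushing the converging-together step through the $\limsup/\liminf$ formulation of $\MO$-convergence, and making the Riemann-sum and tensorized regular-variation limits rigorous, is where the remaining care is required; the case $j=1$ recovers \cite[Theorem~2.1]{hult2005} and serves as a consistency check.
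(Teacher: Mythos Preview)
Your proposal is correct and follows essentially the same strategy as the paper's proof: decompose according to $\R^n=\bigcup_k\Ceq{k}(\lambda_n^{1-\varepsilon_0})$, kill the contributions from $k\le j-1$ via Corollary~\ref{cor_small_dev} (using that on those events $\bX^{(n)}/\lambda_n$ is within $\delta$ of $\D_{\le(j-1)}$), kill $k\ge j+1$ by the rate comparison $[n\P{|Z_1|>\lambda_n^{1-\varepsilon_0}}]^{j+1}=o\big([n\P{|Z_1|>\lambda_n}]^{j}\big)$, and identify the limit from the $k=j$ term with a $\delta$-fattening and $\delta\downarrow0$. The only organizational difference is in the $k=j$ step: you first prove the source convergence $\gamma_n^{(j)}\P{\bV^{(n)}\in\cdot}\to\nu_\alpha^j\times\Leb_j$ on $\Rjzero\times U_j^\uparrow$ and then push forward through $h_j$ via Theorem~\ref{thm:cont_map}, whereas the paper computes directly on the pre-image $h_j^{-1}(F^\delta)=(J(F^\delta),T(F^\delta))$ and takes the two limits \eqref{eqn:pj1_{part1}}--\eqref{eqn:pj1_{part2}} separately --- this is the same calculation in slightly more modular packaging.
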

Note that under the conditions of Lemma \ref{lem_small_dev} we have $\bX^{(n)}/\lambda_n \to 0$ ($0 \in \D$) in probability.
\begin{remark}
In other words, the theorem states that the random element $\bX_n = \bX^{(n)}/\lambda_n$ satisfies a sequence of LDPs on $\{\D_{\leq j} \}_{j\geq 1}$. That is, the large deviations of the random walk concentrate on step functions with an increasing number of steps at increasingly faster rates.
In particular, for polynomially bounded rates $\gamma_n$ large deviations of partial sum processes of iid regularly varying random variables will always concentrate on step functions, among all functions in $\D$.
%
Naturally $\D\backslash\D_{\leq j}$ are not the only  possible spaces to look at; and other types of LDPs might hold for dependent processes; we do not investigate such possibilities here. 
Additionally we do not investigate large deviations on $\D \backslash \bigcup_{j=0}^\infty \D_{=j}$ in this paper. 
\end{remark}

\begin{proof}[Proof of Theorem \ref{thm_large_dev}]
We show convergence in $\MO$ according to Definition \ref{def_conv_MO} for \eqref{eq:large_dev}, starting with the upper bound for closed sets. The idea is to dissect the space $\R^n$, which contains the first $n$ elements of the random walk, into a union of $n$ disjoint sets that define which dimensions are allowed to be ``big''. For any $k=0,1,\ldots, n$, and $\lambda>0$, define
\begin{align*}
	\Ceq k (\lambda) &= \{ \bz\in \R^n: \; |\{i: |z_i| > \lambda \}| = k\}.
\end{align*}
Hence $\Ceq k \subset \R^{n}$ are all points in $\R^{n}$ which have exactly $k$-co-ordinates with absolute value greater than  $\lambda$. Clearly $$\R_{n} = \bigcup_{k=0}^{n} \Ceq k (\lambda).$$

\paragraph{Upper bound} Let $F\subset \D$ be a closed set, bounded away from $\D_{\leq (j-1)}$. Then for small $\epsilon_{0}>0$,
\begin{align*}
	\P{ \bX^{(n)}/\lambda_n \in F} &= \P{ \bX^{(n)}/\lambda_n \in F, \; \bZ^{(n)} \in \bigcup_{k=0}^n \Ceq k(\lambda_n^{1-\varepsilon_0}) } \\
	 &= \sum_{i=0}^{n}  \P{ \bX^{(n)}/\lambda_n \in F, \; \bZ^{(n)} \in  \Ceq k(\lambda_n^{1-\varepsilon_0}) } =: \sum_{i=0}^{n} \P{B_{i}}.
\end{align*}
We show that when multiplied by $\gamma_{n}^{(j)}$, all the probabilities above are negligible except $\P{B_{j}}$.
Now, since $F$ was chosen to be bounded away from $\D_{\leq (j-1)}$, there exists a $\delta_0>0$, such that all elements of $F$ have a minimum distance  $\delta_0$ to step functions with at most $j-1$ jumps. In particular $F$ is bounded away from the zero element in $\D$. 

\paragraph{1. Bounding $\P{B_{0}}$} Using Corollary \ref{cor_small_dev}, we have constants $c_{0}>0$ and $\epsilon_{0}>0$ such that,
\begin{align*}
	\P{B_{0}}&\leq \P{ \sup |\bX^{(n)}(t)/\lambda_n| > \delta_0/2, |Z_i| \leq \lambda_n^{1-\varepsilon_0}} \\
		&\leq 2\exp(- c_{0}\lambda_n^{\varepsilon_0}).
\end{align*}
Hence this term is exponentially bounded and goes to 0 when multiplied by $\gamma_{n}^{(j)}$.
\paragraph{2. Bounding $\P{B_{i}}$ for $1\le i \le j-1$}  For $i\in \bI:=\{1,2,\ldots,n\}$, denote by $$\bK(i)=\{\bk=\{k_1,\ldots,k_{i}\}: 1\le k_1<\ldots<k_i\le n\},$$ all possible  subsets of size $i$ of the index set $\bI$.  We will show that $\P{B_{i}}$ for $1\le i \le j-1$ are also exponentially bounded. With the same $\delta_{0}$ as previously chosen, we have
\begin{align*}
\P{B_{i}}  & =  \P{ \bX^{(n)}/\lambda_n \in F, \; \bZ^{(n)} \in  \Ceq i(\lambda_n^{1-\varepsilon_0}) }\\
                        & = \sum_{\bk \in \bK(i)}  \P{ \bX^{(n)}/\lambda_n \in F, \;  |Z_{l}| > \lambda_n^{1-\varepsilon_0},\;  \forall l \in \bk, \;  |Z_{l}| \le \lambda_n^{1-\varepsilon_0 },\; \forall \l \in \bI \backslash \bk   }\\ 
                         & \le   \sum_{\bk \in \bK(i)}  \Prob \left(\sup_{t \in [0,1]}\left|\bX^{(n)}(t)-\sum_{m= 1}^{i} \bX_{k_{m}}^{(n)}(t) \right| > \lambda_n \frac{\delta_{0}}2, \; \right.\\ 
                         & \quad\quad\quad\quad\quad \quad\quad\quad   |Z_{l}| > \lambda_n^{1-\varepsilon_0},\;  \forall l \in \bk, \;  |Z_{l}| \le \lambda_n^{1-\varepsilon_0 },\; \forall l \in \bI \backslash \bk   \Bigg)\\
                         & \le   \sum_{\bk \in \bK(i)}  \Prob \left(\sup_{t \in [0,1]}\left|\sum_{l\in \bI\backslash\bk} \bX_{l}^{(n)}(t) \right| > \lambda_n \frac{\delta_{0}}2,  |Z_{l}| \le \lambda_n^{1-\varepsilon_0 },\; \forall l \in \bI \backslash \bk   \right)\\
                         & =   \sum_{\bk \in \bK(i)}  \Prob \left(   S_{ (n-i)}^{*}(\bk)  > \lambda_n \frac{\delta_{0}}2,  |Z_{l}| \le \lambda_n^{1-\varepsilon_0 },\; \forall l \in \bI \backslash \bk   \right),                       
\end{align*}
where 
	$$ S_{(n-i)}^{*}(\bk) = \sup_{t\in[0,1]} \left| \sum_{l\in \bI\backslash\bk} \bX^{(n)}_{l}(t)\right| \eqd \sup_{t\in[0,1]} \left| \sum_{l=1}^{n-i} \bX^{(n)}_l (t) \right| = \sup_{1\leq l \leq n-i} |S_l| $$ 
for any $\bk\in \bK(i)$. Since the size of the set $\bK(i)$ is $|\bK(i)| = \binom{n}{i}$, we have
\begin{align*}
\P{B_{i}} & \le \binom ni \Prob \left( \sup_{1\leq j \leq n-i} \left|S_{j}\right| > \lambda_n \frac{\delta_{0}}2,  |Z_{l}| \le \lambda_n^{1-\varepsilon_0 },\; 1\le l \le n-i   \right),\\
                        & \le 2 \binom ni \exp(-c_{i}\lambda_n^{\varepsilon_0/2}),
\end{align*}
for some $c_{i}>0$ according to Corollary \ref{cor_small_dev}. Since our choice of $\gamma_n^{(j)} = \left[ n \P{|Z_1|>\lambda_n} \right]^{-j}$, clearly 
$\gamma_n^{(j)} \sum_{i=1}^{j-1}\P{B_{i}} \to 0,$ as $n\to \infty$.

\paragraph{3. Bounding $\P{B_{i}}$ for $j+1\le i \le n$} 
We bound the quantity $\gamma_n^{(j)} \sum_{i=j+1}^{n}\P{B_{i}}$ together.  We argue that when multiplied with $\gamma_n^{(j)}$, the  probability of events with more than $j$ large jumps is also negligible. Observe that 
\begin{align*}
	\gamma_n^{(j)}  \sum_{i=j+1}^{n}\P{B_{i}} &\leq \gamma_n^{(j)}  \P{ \exists k_1,\dots, k_{j+1} \in \bI : \;\;|Z_{k_i}| >\lambda_n^{1-\varepsilon_0}, \; i=1,\dots,j+1} \\
		&=  \gamma_n^{(j)}  \binom{n}{j+1} \P{|Z_1| > \lambda_n^{1-\varepsilon_0}}^{j+1}\\
		&   \leq c n \frac{\P{|Z_1|> \lambda_n^{1-\varepsilon_0}}^{j+1}}{\P{|Z_1| > \lambda_n}^j} =:f_{n},
\end{align*}
for some $c>0$. Now $f_{n}$ is a regularly varying sequence with parameter  $$r_{0}:= 1 -(j+1)\rho\alpha + \varepsilon_0\rho(j+1)\alpha + j\rho\alpha =  (1-\alpha\rho) + (j+1)\varepsilon_0\rho\alpha,$$ see \cite[Appendix]{FerreiraDeHaan2006:extreme} for details on operations on regular variation.  Since by choice $\alpha\rho>1$, for small enough $\varepsilon_{0}$, we have $r_{0}<0$. Hence $$\gamma_n^{(j)}  \sum_{i=j+1}^{n}\P{B_{i}}  \le f_{n} \to 0$$ as $n \to \infty$.

\paragraph{4. Bounding $\P{B_{j}}$} Finally, we are left with the term $\gamma_n^{(j)} \P{B_{j}} $ which is the non-negligible contributing term for large $n$. For $\delta>0$, let $$F^{\delta} := \{x \in\D: d_{J_{1}}(x,F)\le \delta\} $$ with $\delta$ small enough such that $F^{\delta}$ is still bounded away from $\D_{\le(j-1)}$.
\begin{align*}
\P{B_{j}} & = \P{ \bX^{(n)}/\lambda_n \in F, \; \bZ^{(n)} \in  \Ceq i(\lambda_n^{1-\varepsilon_0}) }\\
                        & = \sum_{\bk\in \bK(j)}  \P{ \bX^{(n)}/\lambda_n \in F, \;  |Z_{l}| > \lambda_n^{1-\varepsilon_0},\;  \forall l \in \bk, \;  |Z_{k}| \le \lambda_n^{1-\varepsilon_0 },\; \forall l \in \bI \backslash \bk   }\\ 
                            & \le   \sum_{\bk \in \bK(j)}  \text{P} \left(\sup_{t \in [0,1]}\left|\bX^{(n)}(t)-\sum_{m=1}^{j} \bX_{k_{m}}^{(n)}(t) \right| \le \lambda_n \delta, \bX^{(n)}/\lambda_n \in F ,\; \right.\\ 
                         & \quad\quad\quad\quad\quad \quad\quad\quad \quad \quad   |Z_{l}| > \lambda_n^{1-\varepsilon_0},\;  \forall l \in \bk, \;  |Z_{l}| \le \lambda_n^{1-\varepsilon_0 },\; \forall l \in \bI \backslash \bk   \Bigg)\\
                                & \quad \quad +   \sum_{\bk \in \bK(j)}  \text{P} \left(\sup_{t \in [0,1]}\left|\bX^{(n)}(t)-\sum_{m=1}^{j} \bX_{k_{m}}^{(n)}(t) \right| > \lambda_n \delta, \bX^{(n)}/\lambda_n \in F ,\; \right.\\ 
                          & \quad\quad\quad\quad\quad \quad\quad\quad \quad \quad \quad \quad   |Z_{l}| > \lambda_n^{1-\varepsilon_0},\;  \forall l \in \bk, \;  |Z_{l}| \le \lambda_n^{1-\varepsilon_0 },\; \forall l \in \bI \backslash \bk   \Bigg)\\
	&\leq \sum_{\bk \in \bK(j)}  \P{ \sum_{m=1}^{j} \bX^{(n)}_{k_{m}}/\lambda_n \in F^\delta }\\ 
	& \quad \quad + \binom{n}{j} \P{\sup_{t\in [0,1]} \left| \sum_{l \in \bI\backslash \bk} \bX^{(n)}_{l}\right|> \lambda_n\delta,  |Z_{l}| \le \lambda_n^{1-\varepsilon_0 },\; \forall l \in \bI \backslash \bk } \\
	&= P_{j,1}^{(n)} + P_{j,2}^{(n)}.
\end{align*}
Now, using Lemma \ref{lem_small_dev}, and arguments similar to the one for bounding $\P{B_{i}}$ for $1\le i \le j-1$, we can check that the quantity $P_{j,2}^{(n)}$ is negligible at rate $\gamma_n^{(j)}$ and hence $\gamma_n^{(j)}P_{j,2}^{(n)} \to 0$ as $n\to \infty$. \\
In the remaining term $P_{j,1}^{(n)}$, we use the inverse of the map $h_j$ to measure the probability. 
For any closed set $F^{*} \subset \D$, define 
\begin{align*}
	(J(F^*),T(F^*)) :=h_{j}^{-1}(F^{*}\cap\D_{=j}) \subset \Rjzero \times U_j^\uparrow
\end{align*}
to be the pre-image of $F^* \cap \D_{=j}$ under the map $h_{j}$ broken into the \emph{jump} part and the \emph{time} part. 
Also note that due to the continuity of $h_j$ (Lemma \ref{lem_h_continuous}), since $F^{\delta}$  is closed, the pre-image of $F^{\delta}$ is also closed.  
Clearly, $\sum_{m=1}^{j} \bX^{(n)}_{k_{m}}/\lambda_n \in F^\delta$ is equivalent to $\sum_{m=1}^{j} \bX^{(n)}_{k_{m}}/\lambda_n \in F^\delta \cap \D_{=j}$, as $F^\delta$ is bounded away from $\D_{\leq j-1}$. Thus,
\begin{align*}
	P_{j,1}^{(n)} &= \sum_{\bk \in \bK(j)} \P{ \sum_{m=1}^{j} \bX^{(n)}_{k_{m}}/\lambda_n \in F^\delta }\\
	  &= \sum_{\bk \in \bK(j)} \P{ \sum_{m=1}^{j} \bX^{(n)}_{k_{m}}/\lambda_n \in F^\delta \cap\D_{=j} }\\ & = \sum_{\bk \in \bK(j)} \P{ \sum_{m=1}^{j} Z_{k_{m}}\ind{k_{m}/n}(t)/\lambda_n \in F^\delta \cap\D_{=j} }\\
	&= \sum_{{1\leq k_1, \dots,k_j\leq n }} \text{P}\left( \left(\frac{Z_{k_{m}}}{\lambda_{n}}\right)_{1\le m\le j} \in J(F^{\delta})\right) \mathds{1}\left( \left(\frac{k_1}{n},\ldots,\frac{k_j}{n}\right)\in T(F^{\delta}) \right) \\
	& = \text{P} \left(\left(Z_{1},\ldots,Z_{j}\right)/\lambda_{n}  \in J(F^{\delta}) \right)  \sum_{1\leq k_1< \dots< k_j \leq n}   \mathds{1}\left( \left(\frac{k_1}{n},\ldots,\frac{k_j}{n}\right)\in T(F^{\delta}) \right).
\end{align*} 
Note that as $n\to \infty$,
\begin{align}\label{eqn:pj1_{part1}}
 \P{|Z_{1}|>\lambda_{n}}^{-j}\text{P} \left(\left(Z_{1},\ldots,Z_{j}\right)/\lambda_{n}  \in J(F^{\delta}) \right)  \to \nu_{\alpha}^{j}(J(F^{\delta})).
\end{align}
Similarly, for $T(F^\delta)$, we obtain for $n\to \infty$,
\begin{align}\label{eqn:pj1_{part2}}
n^{-j} \sum_{1\leq k_1< \dots< k_j \leq n}   \mathds{1}\left( \left(\frac{k_1}{n},\ldots,\frac{k_j}{n}\right)\in T(F^{\delta}) \right) \to Leb_{j}(T(F^{\delta})).
\end{align}
%
%
Hence using \eqref{eqn:pj1_{part1}} and \eqref{eqn:pj1_{part2}}, we have as $n\to \infty$,
\[\gamma_{n}^{{(j)}} P_{j,1}^{(n)} \to (\nu_{\alpha}^{j}\times\Leb_{j})(J(F^{\delta}),T(F^{\delta})) = (\nu_\alpha^j \times \Leb_{j}) \circ h_j^{-1} (F^\delta). \]
Therefore
\[ \limsup_{n\to\infty} \gamma_{n}^{{(j)}} \P{B_{j}}\le  (\nu_\alpha^j \times \Leb_{j}) \circ h_j^{-1} (F^\delta),  \]
for $\delta>0$.
Summing up  all the bounds we obtained, we have 
$$ \limsup_{n\to\infty} \gamma_n^{(j)} \P{\bX^{(n)}/\lambda_n \in F} \leq (\nu_\alpha^j \times \Leb_{j})(h_j^{-1}(F^\delta)).$$ 
The continuity of $h_j$ ensures $h_j^{-1}(F) = \bigcap_{\delta>0} h_j^{-1}(F^\delta)$ and hence letting $\delta \to 0$ gives us the required upper bound
\[ \limsup_{n\to\infty} \gamma_{n}^{{(j)}} \P{ \bX^{(n)}/\lambda_n \in F}  \le  (\nu_\alpha^j \times \Leb_{j}) \circ h_j^{-1} (F).  \]
\paragraph{Lower bound}
Let $G$ be open and bounded away from $\D_{\leq (j-1)}$. Now define, $G^{-\delta} \subset G$,
\[G^{{-\delta}} = \{f\in G: 
	\; d_{J_{1}}(f,g) < \delta \text{ implies } g\in G\}. \]Choose $\delta$ small enough such that $G^{-\delta}$ is non-empty. It is still open and bounded away from $\D_{\leq (j-1)}$. Searching for a lower bound, we shrink the set $G$ to its bare minimum,
\begin{align*}
	\P{\bX^{(n)}/\lambda_n \in G} &\geq \sum_{1\leq k_1< \dots< k_j \leq n} \P{\sum_{i=1}^j \bX_{k_i}^{(n)}/\lambda_n \in G^{-\delta}, \sup |\bX^{(n)}-\sum_{i=1}^j \bX_{k_i}^{(n)}| < \lambda_n\delta } \\
	&= \sum_{1\leq k_1< \dots< k_j \leq n} \P{\sum_{i=1}^j \bX_{k_i}^{(n)}/\lambda_n \in G^{-\delta}} \P{\sup |\bX^{(n)}-\sum_{i=1}^j \bX_{k_i}^{(n)}| < \lambda_n\delta } \\
\end{align*}
The second factor converges to one since $S_n/\lambda_n \to 0$ in probability as $n\to \infty$.
For the first factor, we proceed in the same fashion as for $P_{j,1}^{(n)}$ above.

\end{proof}
\begin{remark}
Note that instead of our functions being in $\D[0,1]$, we can easily extend Theorem $\ref{thm_large_dev}$  to c\`adl\`ag functions in  $\D_{M}=\D[0,M]$ for some number $M>0$, with minor modifications to the proof. Hence all the results obtained in this section hold if we amend the definitions of the spaces $\D,\D_{=j},\D_{\le j}$ accordingly. Without loss of generality we refer to these results as if they hold for $\D_{M}$ and its appropriate subsets from now on.
\end{remark}

\subsection{Random walks with a constant drift}

The conclusion in Theorem \ref{thm_large_dev}  assumes that the random variables are centred. In case $\bE{Z_1} \neq 0$, we can use the theorem to infer information about the deviations from the mean for a process created with iid variables $Z_{1}^{*} = Z_{1}- \bE{Z_1}$. Nevertheless, if we assume $\alpha>1$ and $\bE{Z_1} \neq 0$ we may look at the random walk with drift. By setting $\lambda_n = n$ we are able to  preserve the drift in the limit. Theorem \ref{thm_large_dev}  can be modified to incorporate a drift term; for this we require two further lemmas as given below. Recall that $e$ denotes the identity function on the respective domain $[0,M]$.
\begin{lemma}\label{lem_add_cont_fun_is_cont}
	Let $f : [0,M] \to \R$ be continuous. Then the map 	$\phi_f: \D_M \to \D_M $ 
\begin{align*}
            			\phi_{f}: & \; x \mapsto x+f
			    \end{align*}
    is continuous in the $J_1$-topology.
\end{lemma}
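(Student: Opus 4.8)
The goal is to show that adding a fixed continuous function $f\in C([0,M],\R)$ is a continuous self-map of $(\D_M, d_{J_1})$. The plan is to work directly from the definition of $d_{J_1}$ in \eqref{def:J1}, using the same time-change $\lambda\in\Lambda$ for both $x$ and $\phi_f(x)$, and to control the extra error term $\|f-f\circ\lambda\|$ via uniform continuity of $f$ on the compact interval $[0,M]$. Concretely, fix $x\in\D_M$ and $\epsilon>0$; by uniform continuity of $f$ choose $\eta>0$ so that $|s-t|\le\eta$ implies $|f(s)-f(t)|\le\epsilon/2$, and set $\delta := \min\{\eta,\epsilon/2\}$.

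Next I would take any $y\in\D_M$ with $d_{J_1}(x,y)<\delta$ and pick a near-optimal $\lambda\in\Lambda$, i.e.\ one with $\|x-y\circ\lambda\|\vee\|\lambda-e\|<\delta$. Using the \emph{same} $\lambda$ to bound $d_{J_1}(\phi_f(x),\phi_f(y)) = d_{J_1}(x+f,\,y+f)$, one estimates
\begin{align*}
\|(x+f)-(y+f)\circ\lambda\| &= \|(x+f)-(y\circ\lambda)-(f\circ\lambda)\|\\
&\le \|x-y\circ\lambda\| + \|f-f\circ\lambda\|\\
&< \delta + \epsilon/2 \le \epsilon,
\end{align*}
where $\|f-f\circ\lambda\|\le\epsilon/2$ follows because $\|\lambda-e\|<\delta\le\eta$ forces $|f(\lambda(t))-f(t)|\le\epsilon/2$ for all $t$. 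Since also $\|\lambda-e\|<\delta\le\epsilon/2\le\epsilon$, taking the infimum over admissible time-changes gives $d_{J_1}(\phi_f(x),\phi_f(y))\le\epsilon$, which establishes continuity of $\phi_f$ at $x$. As $x$ was arbitrary, $\phi_f$ is continuous on $\D_M$.

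There is essentially no serious obstacle here; the only point requiring care is that the time-change $\lambda$ must be reused for the shifted functions (so that the $x$-vs-$y$ discrepancy is not worsened), while simultaneously noting that the newly introduced term $\|f-f\circ\lambda\|$ is harmless precisely because $f$ is continuous on a compact interval and $\lambda$ is uniformly close to the identity. One could alternatively phrase this via sequences ($x_n\to x$ in $J_1$ implies $x_n+f\to x+f$), but the $\epsilon$--$\delta$ argument above is cleanest and avoids any appeal to metrizability subtleties.
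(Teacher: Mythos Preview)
Your proof is correct and follows essentially the same approach as the paper: reuse the near-optimal time-change $\lambda$ witnessing $d_{J_1}(x,y)<\delta$ and control the extra term $\|f-f\circ\lambda\|$ via uniform continuity of $f$ on the compact interval $[0,M]$. The paper phrases the latter bound in terms of the modulus of continuity $w_f(\delta)$ rather than an explicit $\epsilon$--$\delta$ choice, but the argument is the same.
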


\begin{proof}
	Let $\varepsilon>0$. Suppose $x,y \in \D_M$ and $d_{J_{1}}(x,y)<\delta$. Then 
    \begin{align*}
    	d_{J_1}(\phi_f(x), \phi_f(y)) &= \inf_\lambda \| x + f - (y + f) \circ \lambda \| \vee \| e- \lambda \| \\ 
        	&\leq \inf_\lambda \left( \| x - y \circ \lambda \| \vee \|e-\lambda\| + \|f-f\circ \lambda\| \vee \| e-\lambda \| \right) \\ 
            &\leq 2 \delta + \| f - f\circ \lambda_{\mathrm{min}} \| \vee 2\delta ,
    \end{align*}
    where $\lambda_{\min}$ denotes a time-shift close to the infimum of the distance of $x$ and $y$. We may bound the fluctuation of $f$ by the modulus of continuity $w_f(\delta)= \sup_{|s-t| \leq \delta } |f(s) - f(t)|$ which tends to zero as $\delta \to 0$ to obtain
    \begin{align*}
    	d_{J_1}(\phi_f(x), \phi_f(y)) &\leq 4\delta + w_f(2\delta).
    \end{align*}
\end{proof}

\begin{lemma}\label{lem_add_const_to_zero_is_ok}
	Let $\{ \mu_n\}_{n\geq 1}$ be a sequence of finite measures on $\mathbb S$ and $\mu_0 \in \M(\mathbb S \backslash \mathbb C)$. Suppose $\mu_n \to \mu_0$ in $\MO$ as $n \to \infty$. 
	Additionally assume there is an addition operation such that $(\mathbb S,+)$ forms a group. Let $\{ y_n \}_{n\geq 1} \subset \mathbb S$ be a sequence with $y_n \to 0$. For $y\in \mathbb S$ denote $s_y: A \mapsto A-y,\; A \in \mathcal B(\mathbb S)$ the map shifting sets by an element $y$. Then  
    \begin{align*}
    	\mu_n \circ s_{y_n} \to \mu_0 , \quad n \to \infty 
    \end{align*}
    in $\MO$.
\end{lemma}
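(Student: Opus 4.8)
The plan is to verify the two defining inequalities of $\MO$-convergence from Definition \ref{def_conv_MO} for the shifted sequence $\mu_n\circ s_{y_n}$, reducing each to the corresponding inequality for $\mu_n\to\mu_0$ by sandwiching an arbitrary closed (resp.\ open) set between a slightly enlarged and a slightly shrunk set that is \emph{fixed} (independent of $n$), using that the shift amount $y_n$ becomes arbitrarily small. First I would record the elementary fact that for a set $A$ bounded away from $\mathbb C$ with $\inf\{d(x,y):x\in\mathbb C,y\in A\}=:\eta>0$, and for $\|y_n\|:=d(y_n,0)<\eta/2$ say, the shifted set $A-y_n$ is still bounded away from $\mathbb C$ (with margin at least $\eta/2$); this makes $\mu_n\circ s_{y_n}(A)=\mu_n(A-y_n)$ legitimate to talk about and keeps us inside $\MO$. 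Note also $s_{y_n}$ is a bijection on $\Bcal(\bfS)$ since $(\bfS,+)$ is a group, so $\mu_n\circ s_{y_n}$ is a genuine Borel measure.

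For the closed-set bound, fix a closed set $F$ bounded away from $\mathbb C$ and fix $\varepsilon>0$. Let $F^\varepsilon=\{x\in\bfS: d(x,F)\le\varepsilon\}$ be the closed $\varepsilon$-enlargement, which is still bounded away from $\mathbb C$ for $\varepsilon$ small. Since $y_n\to 0$, there is $N$ with $d(y_n,0)<\varepsilon$ for $n\ge N$, and then $F-y_n\subset F^\varepsilon$ (if $x\in F-y_n$ then $x+y_n\in F$ and $d(x,x+y_n)=d(y_n,0)<\varepsilon$, so $x\in F^\varepsilon$; here I use translation-invariance of the metric, or at the least that $d(x,x+y_n)\to0$ uniformly, which should be assumed as part of ``addition operation such that $(\bfS,+)$ forms a group'' compatible with the topology — see the obstacle remark below). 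Hence for $n\ge N$,
\begin{align*}
	\mu_n\circ s_{y_n}(F)=\mu_n(F-y_n)\le\mu_n(F^\varepsilon),
\end{align*}
so $\limsup_n \mu_n\circ s_{y_n}(F)\le\limsup_n\mu_n(F^\varepsilon)\le\mu_0(F^\varepsilon)$ by hypothesis. Letting $\varepsilon\downarrow0$ and using $\bigcap_{\varepsilon>0}F^\varepsilon=F$ (as $F$ is closed) together with continuity from above of the finite measure $\mu_0$ on sets bounded away from $\mathbb C$, we get $\limsup_n\mu_n\circ s_{y_n}(F)\le\mu_0(F)$.

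For the open-set bound, fix an open set $G$ bounded away from $\mathbb C$ and fix $\varepsilon>0$. Let $G_\varepsilon=\{x\in G: d(x,G^c)>\varepsilon\}$, an open set with $\overline{G_\varepsilon}\subset G$, still bounded away from $\mathbb C$, and with $\bigcup_{\varepsilon>0}G_\varepsilon=G$. For $n$ large, $d(y_n,0)<\varepsilon$ forces $G_\varepsilon\subset G-y_n$ (if $x\in G_\varepsilon$ then $x+y_n$ stays within $\varepsilon$ of $x$, hence in $G$, so $x\in G-y_n$), giving $\mu_n\circ s_{y_n}(G)=\mu_n(G-y_n)\ge\mu_n(G_\varepsilon)$ and therefore $\liminf_n\mu_n\circ s_{y_n}(G)\ge\liminf_n\mu_n(G_\varepsilon)\ge\mu_0(G_\varepsilon)$. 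Letting $\varepsilon\downarrow0$ and using continuity from below of $\mu_0$ yields $\liminf_n\mu_n\circ s_{y_n}(G)\ge\mu_0(G)$. Together with the closed-set bound this is exactly $\mu_n\circ s_{y_n}\to\mu_0$ in $\MO$.

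\textbf{Main obstacle.} The only genuine subtlety is the interaction between the group operation and the metric: the argument above repeatedly uses that $d(x,x+y_n)\to0$ uniformly in $x$ as $y_n\to0$ (equivalently, that translations converge uniformly to the identity), which is automatic in a normed/Euclidean setting, in $\D_M$ with the $J_1$ metric when $y_n$ are deterministic functions tending to $0$, and in the product spaces used in this paper, but is not literally implied by ``$(\bfS,+)$ is a group'' alone. I would either state this compatibility as a standing assumption (it holds in all applications in the paper, where $y_n$ are the deterministic drift-correction terms of Lemma \ref{lem_add_cont_fun_is_cont}'s type) or phrase the enlargement/shrinking using the explicit modulus $w(\delta):=\sup_x d(x,x+y)$ over $d(y,0)\le\delta$ and note $w(\delta)\to0$. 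A secondary, purely bookkeeping point is checking that all the auxiliary sets $F^\varepsilon, G_\varepsilon, F-y_n, G-y_n$ remain bounded away from $\mathbb C$, which follows from the margin estimate in the first paragraph; this guarantees every measure evaluation is finite and the limiting operations $\varepsilon\downarrow0$ are justified by finiteness of $\mu_0$ on such sets.
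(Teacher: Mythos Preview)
Your proof is correct and follows essentially the same approach as the paper's: enlarge a closed $F$ to $F^\delta$ (and shrink an open $G$), use $F-y_n\subset F^\delta$ for large $n$, pass to the $\limsup$/$\liminf$, then let $\delta\downarrow 0$. Your identification of the hidden compatibility assumption between the group operation and the metric is apt --- the paper tacitly uses $d(x,x+y_n)<\delta$ whenever $d(y_n,0)<\delta$ without comment, so your proposal is in fact more careful on this point.
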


\begin{proof}
	Let $F \subseteq \mathbb S$ be closed and bounded away from $\mathbb C$. Let $\delta >0$ such that $F^\delta$ is still bounded away form $\mathbb C$. For $n$ large enough $d(y_n,0)<\delta$ (where $d$ denotes the metric on $\mathbb S$) and hence
	\begin{align*}
		\mu_n \circ s_{y_n} (F) &\leq \mu_n(F^\delta).
    \end{align*}
    A similar argument holds for open sets $G$ bounded away form $\mathbb C$. Letting $\delta \to 0$ proves the result.
\end{proof}

\begin{corollary}[Corollary to Theorem \ref{thm_large_dev}]\label{cor_hrv_drift}
	Let $\{Z_i\}_{i=1}^\infty$ be a sequence of iid random variables with $Z_1 \in \mathcal{RV}_{-\alpha}, \alpha>1$. Denote $m = \bE{Z_1}$ and define 
\begin{align*}
		&h_j^m: \left(\R\backslash\{0\}\right)^j \times U_j^\uparrow \to \D,\\
		&h_j^m((\bz,\bu))(t) :=  \sum_{i=1}^j z_i\ind{u_i\leq t} + mt,
\end{align*}
and correspondingly $\D_{=j}^m := h_j^m (\R^j\backslash\{0\} \times U_j^\uparrow)$. Then, as $n \to \infty$,
\begin{align*}
	\gamma_n^{(j)} \P{ \bX^{(n)}/n \in  \cdot} \to (\nu_\alpha^j \times \Leb_{j}) \circ \left(h_j^m\right)^{-1} (\cdot), 
\end{align*}
in $\mathbb{M}(\D\backslash \D_{\leq (j-1)}^m)$.
\end{corollary}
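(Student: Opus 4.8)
The plan is to reduce to the centred setting already treated in Theorem \ref{thm_large_dev} and then transport the conclusion across the deterministic drift by a continuous-mapping argument. Set $Z_i^{*} := Z_i - m$; then $\{Z_i^{*}\}_{i\ge 1}$ are iid with $Z_1^{*}\in\RV{-\alpha}$ (same index and tail balance, this being standard for a constant shift of a regularly varying variable), $\bE{Z_1^{*}}=0$, and $\var{Z_1^{*}}=\var{Z_1}$. Write $\bX^{*(n)}$ for the $\D$-embedding of the random walk driven by the $Z_i^{*}$. Because $\alpha>1$ we may take $\lambda_n=n$, which is regularly varying with $\rho=1$, and $\rho=1$ satisfies the hypotheses of Lemma \ref{lem_small_dev} in both cases (in the finite-variance case $1>\tfrac12$, in the infinite-variance case $\alpha\cdot 1>1$). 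Hence Theorem \ref{thm_large_dev} applies to $\bX^{*(n)}$ and yields
\[
	\gamma_n^{(j)}\,\P{\bX^{*(n)}/n \in \cdot} \to (\nu_\alpha^j \times \Leb_j)\circ h_j^{-1}(\cdot) \qquad \text{in } \M(\D\backslash \D_{\le(j-1)}).
\]

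Next, decompose the drifted walk. For $t\in[0,M]$ we have $\bX^{(n)}(t)/n = \bX^{*(n)}(t)/n + m\lfloor nt\rfloor/n$. Put $g(t):=mt$ and let $\phi_g:\D\to\D$ be the translation map of Lemma \ref{lem_add_cont_fun_is_cont}. Then $\bX^{(n)}/n = \phi_g\bigl(\bX^{*(n)}/n\bigr) + y_n$, where $y_n(t):=m\bigl(\lfloor nt\rfloor - nt\bigr)/n$ satisfies $\|y_n\|\le |m|/n\to 0$, so $y_n\to 0$ in $(\D,d_{J_1})$. Note also that $\phi_g\circ h_j = h_j^m$ directly from the definitions, and $\phi_g(\D_{\le(j-1)}) = \D_{\le(j-1)}^m$.

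Now apply the continuous mapping theorem (Theorem \ref{thm:cont_map}) to $\phi_g$. By Lemma \ref{lem_add_cont_fun_is_cont}, $\phi_g$ is uniformly continuous, as is its inverse $\phi_{-g}$, so $\phi_g$ is a homeomorphism of $\D$; in particular it has no discontinuity points and $\D_{\le(j-1)}^m=\phi_g(\D_{\le(j-1)})$ is closed. Using the modulus-of-continuity bound $d_{J_1}(\phi_{-g}x,\phi_{-g}y)\le 4\delta + w_{-g}(2\delta)$ from the proof of Lemma \ref{lem_add_cont_fun_is_cont} (and its analogue for $\phi_g$), one checks that $\phi_g$ and $\phi_{-g}$ each carry sets bounded away from $\D_{\le(j-1)}$, respectively $\D_{\le(j-1)}^m$, to sets bounded away from the other; this is precisely the preimage hypothesis of Theorem \ref{thm:cont_map}. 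Applying it gives
\[
	\gamma_n^{(j)}\,\P{\phi_g(\bX^{*(n)}/n)\in\cdot} \to (\nu_\alpha^j\times\Leb_j)\circ h_j^{-1}\circ\phi_g^{-1}(\cdot) = (\nu_\alpha^j\times\Leb_j)\circ(h_j^m)^{-1}(\cdot)
\]
in $\M(\D\backslash\D_{\le(j-1)}^m)$, the last equality because $\phi_g\circ h_j=h_j^m$.

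Finally, pass from $\phi_g(\bX^{*(n)}/n)$ to $\bX^{(n)}/n = \phi_g(\bX^{*(n)}/n)+y_n$ using Lemma \ref{lem_add_const_to_zero_is_ok} with $\mathbb S=\D$, $\mathbb C=\D_{\le(j-1)}^m$, the group operation being pointwise addition, the finite measures $\mu_n:=\gamma_n^{(j)}\P{\phi_g(\bX^{*(n)}/n)\in\cdot}$, and the null sequence $y_n\to 0$; this yields $\gamma_n^{(j)}\P{\bX^{(n)}/n\in\cdot}\to(\nu_\alpha^j\times\Leb_j)\circ(h_j^m)^{-1}$ in $\M(\D\backslash\D_{\le(j-1)}^m)$, which is the claim. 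I expect the only genuinely delicate point to be the verification of the preimage condition in Theorem \ref{thm:cont_map} — i.e. that ``bounded away from $\D_{\le(j-1)}$'' is preserved in both directions under translation by $g$ — which relies on the uniform continuity of both $\phi_g$ and $\phi_{-g}$; the remaining steps are bookkeeping, including the standard fact that $Z_1-m$ stays in $\RV{-\alpha}$.
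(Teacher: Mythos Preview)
Your proof is correct and follows essentially the same route as the paper: centre the variables, apply Theorem \ref{thm_large_dev} with $\lambda_n=n$, push through the drift via the translation map of Lemma \ref{lem_add_cont_fun_is_cont} and continuous mapping, and absorb the $O(1/n)$ discrepancy $m(\lfloor nt\rfloor-nt)/n$ using Lemma \ref{lem_add_const_to_zero_is_ok}. You are simply more explicit than the paper about verifying the bounded-away hypothesis of Theorem \ref{thm:cont_map} (via the uniform continuity of $\phi_g$ and $\phi_{-g}$) and about checking that $\lambda_n=n$ meets the conditions of Lemma \ref{lem_small_dev}, both of which the paper leaves implicit.
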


\begin{remark}
	The space $\D_{=j}^m$ is defined as the space of step functions with exactly $j$ discontinuities and a constant drift term ``$mt$''. In particular, $\D_{=0}^m = \{ x(t) = mt, \; t \in [0,M]\}$. Theorem \ref{thm_large_dev} allowed for scalings $\lambda_n$ that are growing fast enough such that $\bX^{(n)}/\lambda_n$ stays close to zero for large $n$. Note that in Corollary \ref{cor_hrv_drift} we restrict to $\alpha>1$ and specify $\lambda_n = n$ to preserve the drift term. Necessarily, we observe for sets $A$ bounded away from $\D_{=0}^m$ that $\P{\bX^{(n)}/n \in A} \to 0$ as $n \to \infty$. Hence we examine (a sequence of) large deviation principles on $\D\backslash \D_{\leq j-1}^m$.
\end{remark}

The proof to the corollary is an application of Lemmas  \ref{lem_add_cont_fun_is_cont} and \ref{lem_add_const_to_zero_is_ok}, and the continuous mapping argument of Theorem \ref{thm:cont_map}.

\begin{proof}
By Theorem \ref{thm_large_dev} we have
\begin{align*}
	\gamma_n^{(j)} \P{(\bX^{(n)} - \lfloor ne \rfloor m )/n \in \cdot} \to (\nu_\alpha^j \times \Leb_{j} ) \circ h_j^{-1}(\cdot), \quad n\to \infty,
\end{align*}
in $\mathbb{M}(\D\backslash \D_{\leq (j-1)})$.
Continuous mapping yields
\begin{align*}
	\gamma_n^{(j)} \P{(\bX^{(n)} - \lfloor ne \rfloor m )/n  + me \in \cdot} \to (\nu_\alpha^j \times \Leb_{j} ) \circ (h_j^m)^{-1}(\cdot), \quad n\to \infty,
\end{align*}
in $\mathbb{M}(\D\backslash \D_{\leq (j-1)}^m)$ by virtue of Lemma \ref{lem_add_cont_fun_is_cont}. The result then follows by Lemma \ref{lem_add_const_to_zero_is_ok}.
\end{proof}

\section{Application to finite buffer queues}
\label{sec:queues}
In this section we apply the results of Theorem \ref{thm_large_dev} to the modified Lindeley recursion; see \eqref{eqn:queue:recursion} below. This formula is usually interpreted as describing the evolution of the queue length in a queue with finite buffer. First we derive large deviation principles for what we call long intense periods, defined as the maximum time a queue-size process spends continuously above a certain threshold. These LDPs are derived in a limit where both, the threshold level and the buffer size, approach infinity while the arrival process is sped up appropriately. Second, we present a simulation study which combines two of the derived LDPs to provide a simple analytical approximation and explanation for the empirical distribution of extremal lengths of long intense periods.
\subsection{Queueing processes}
\label{subsec:queueing:processes}
We study recursions of the form 
\begin{align}\label{eqn:queue:recursion}
	Q_n^K =  \min\{ \max\{Q_{n-1}^K + A_n - C_n,0\},K\},
\end{align}
with $Q_0 \geq 0, n \geq 1$ where $\{A_n\}_{n \geq 1}$ and $\{C_n\}_{n\geq 1}$ are two sequences of iid non-negative random variables. This is the modified version of Lindley's recursion \citep{lindley1952} to accommodate queues with finite buffers of size $K$. The recursion in \eqref{eqn:queue:recursion} can be interpreted in many different ways. For example in the context of network traffic, $A_n$ may be interpreted as the number of packets arriving in the time interval $C_n-C_{n-1}$, whereas $Q_{n-1}$ describes the amount of work previously in the buffer of a single server processing work at a fixed rate. Any number of packets arriving at a full buffer are immediately discarded. For example, \cite{jelenkovic1999:lossSubexp} studies \eqref{eqn:queue:recursion} under the assumption that $\int_0^x \P{A_1>z}\diff z/ \E{A_1}$ follows a subexponential distribution to conclude that the stationary loss rate is essentially due to one large observation when the buffer size approaches infinity. That is
\begin{align*}
	\bE{(Q_n^K +A_{n+1} - C_{n+1} - B) \vee 0 } = \bE{(A-K) \vee 0}(1 + o(1)), \quad K\to \infty.
\end{align*}

Sample path LDPs for queueing processes with both infinite and finite buffers are studied in \cite{ganesh2004:bigQ} mostly under the assumption that the moment generating function exists. We work with regularly varying random variables $A_1 \in \RV{-\alpha}, \alpha>0$ throughout which do not satisfy this assumption. 

To study the queueing recursion \eqref{eqn:queue:recursion} we follow the continuous mapping approach. First we define a suitable embedding of the sequences $\{A_n\}$ and $\{C_n\}$ in the space $\D_M$ and then employ a continuous map to obtain a process which agrees with the queueing recursion at specified discrete time stamps. See for example \cite{whitt2002:SPL}, \cite{asmussen2003:apq} or \cite{Andersen2015} for more on this approach applied to queueing processes. This map is usually called reflection or Skorohod map. 
We briefly recall the required results.
For a process $x \in \D_M$ with $x(0) = 0$ we call $\{v(t), l(t), u(t)\}$ the solution to the Skorohod problem if $v(t) \in [0,K]$ and
\begin{align*}
	v(t) = x(t) + l(t) - u(t), \;\;
	\int_0^\infty v(t) \diff l(t) = 0, \;\;
	\int_0^\infty (K-v(t)) \diff u(t) = 0,
\end{align*}
and both $l,u$ are non-negative non-decreasing functions.
Denote $\psi_0^K:\; \D_M \to \D_M$ the reflection map on the interval $[0,K]$ as
\begin{align}\label{def_eqn_reflectionmapping}
	\psi_0^K : x \mapsto v, 
\end{align}
where $v$ denotes the resulting regulated process of the Skorohod problem. 
%
This map is (Lipschitz-) continuous on $\D_M$ equipped with the $J_1$ metric, see e.g. Lemma 4.6 of \cite{Andersen2015}.

To facilitate the discussion let $C_n = c, n\geq 1$ for some $c>0$ and denote 
\begin{align*}
	\bA(t) := \sum_{i=1}^\infty (A_i-c)  \ind{t\geq i},  \; t \geq 0
\end{align*} 
the embedding of the random walk induced by $A_n - c$ in $\D_M$. Then 
\begin{align}\label{def:QK}
	\bQ^K := \psi_0^K(\bA)
\end{align}
is an embedding of $Q_n^K$ into $\D_M$ satisfying $\bQ^K(t) = Q_t^K$ for $t \in \N_0$. Consequently we call $\bQ^K$ a queueing process with buffer $K$.

\begin{remark}
	We could also work with other embeddings as for example $$\bB(t):= \sum_{n=1}^\infty A_n\ind{t \geq n} - ct, \; t \geq 0$$ and define $\bQ_B^K := \psi_0^K(\bB)$ to allow for more nuanced interpretations of the queueing process $\bQ$. But since this work focuses on scaled versions of the queueing process with both time $t$ and space $\bQ^K(t)$ scaled appropriately, the exact form of the interpolation is mostly irrelevant for the limit.
\end{remark}

\subsection{Long intense periods}
\label{subsec:long:intense:periods}

We adopt the position that a queueing process with the queue size close to the buffer $K$ corresponds to an undesirable state. In such a state the service quality (of which $\bQ^K(t)$ is a proxy) is perceived as suboptimal. In the following we introduce and study the longest period an observed queueing process spends above a certain threshold $\theta K$ during the observation horizon $[0,M]$. We call such intervals long intense periods and investigate their length. 
\begin{definition}[Long intense period]
	For a c\`adl\`ag function $x \in \D_M$ and a fixed level $\eta \in \R_+$ we define
	\begin{align}\label{def:lip}
		\begin{split}
		& L^\eta: \D_M \to \R_+ \\
	 	& \qquad x \mapsto \sup_{0\leq s < t \leq M} \{ t-s: x(u) >\eta\; \forall u \in (s,t) \}. 
	 	\end{split}
	\end{align}
	For a queueing process $\bQ$ with buffer $K$ we call $L^{\theta K}(\bQ^K)$ the length of the intense period at level $\theta \in (0,1)$. 
\end{definition}

\begin{example} \label{ex:longintense}
 How useful is it to calculate large deviations for long intense periods in queues? Can we gain more insight into waiting times in queues with this information To illustrate the applicability of such results, we use a simulation study which investigates the distribution of long intense periods for large threshold levels $\theta \in (0,1)$.

\begin{figure}[h]
 	\centering
 	\includegraphics[scale=0.5]{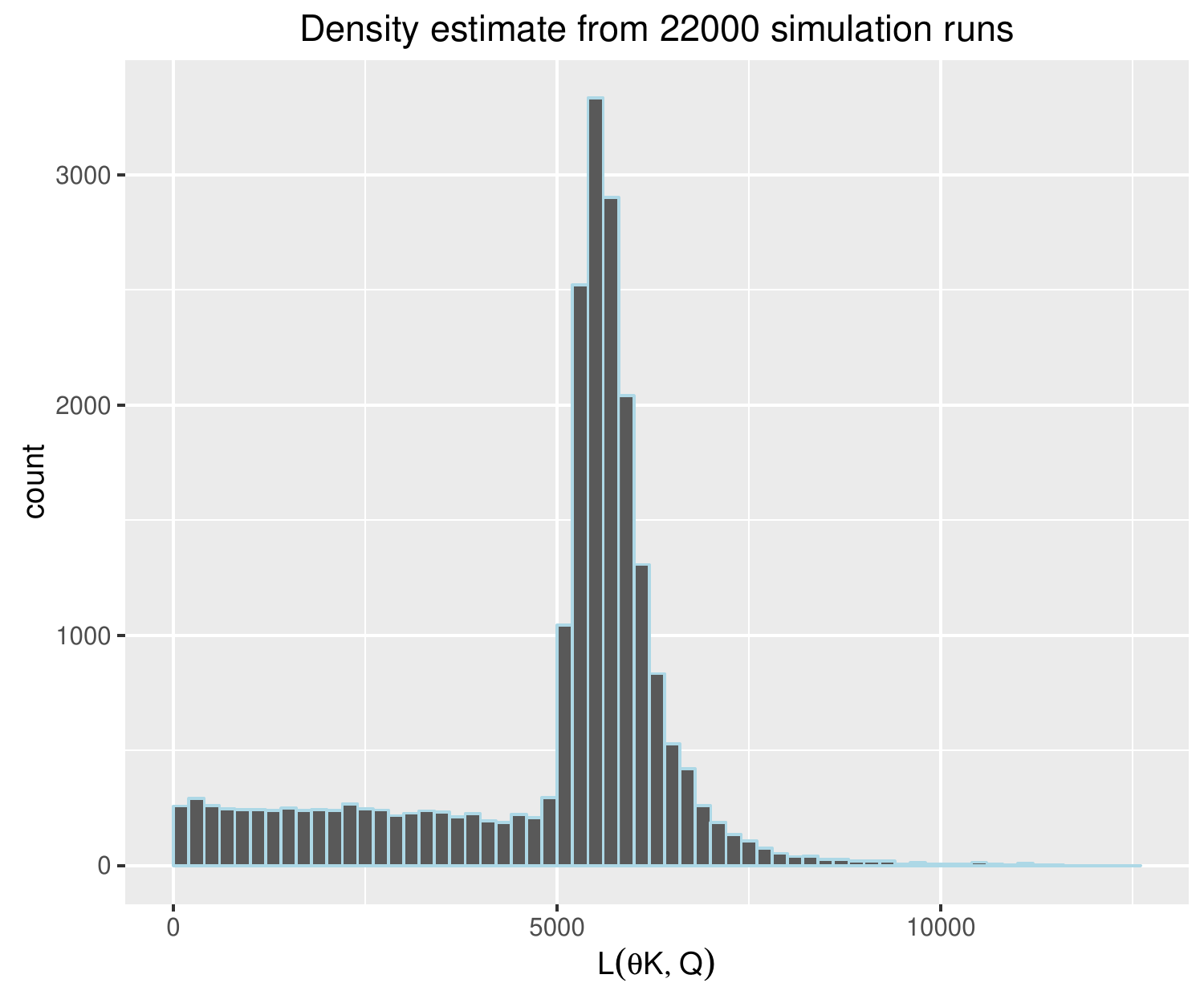}
 	\caption{Histogram for $L^{\theta K}(\bQ^K)| L^{\theta K}(\bQ^K)>0$ generated by 22000 realizations of a queueing process with $\bE{B} = \rho=0.5$, 50000 arrivals at integer time points and maximum capacity $K=20000$. The critical level was set to $\theta = 0.85$. The service time distribution follows an exact power law with $-\alpha = -1.44$.}
 	\label{fig:empty_hist_L}
\end{figure}
The object of our simulation study is a queueing process $\bQ^K(t)$ with $N=50000$ arrival variables following a power law distribution with tail index $\alpha=1.44$ and expectation $m = 0.5$. The queue has a finite buffer $K=20000$ and any additional service requirements will be lost. We assume the server works at a fixed rate $c=1$ with $A_i$ describing the amount of service requirements arriving in one unit of time. 
We study long intense periods above the level $\theta=0.85$, that is $\bQ^K(t)> 17000$ is considered intense. The queueing process is observed on $[0,M]$ with $M=N$.
With this example we treat service time distributions that still have finite means but infinite variance. The particular $\alpha$ value corresponds to the tail parameter of file sizes in Internet traffic reported in \cite{jelenkovic2003}.
Specifically we consider the arrival distribution 
\begin{align*}
	\P{A_1 > z} = \left( \frac{z}{(\alpha-1) m} + 1 \right)^{-\alpha} , \; z>0.
\end{align*}
Trivially $\P{A_1 >z} \in \RV{-\alpha}$.

Figure \ref{fig:empty_hist_L} contains a histogram of the realized lengths of the long intense periods in queueing processes with the parameters above. It is based on 22000 observations which exhibit a strictly positive long intense period. That is, Figure \ref{fig:empty_hist_L} shows a histogram of $L^{\theta K}(\bQ)|L^{\theta K}(\bQ) >0$. We would like to understand the shape of the histogram that we observe here; why is there a peak in the middle and a  decay afterwards? We revisit the histogram at the end of this section, accompanied by an explanation for its shape, based on (hidden) large deviations.
\end{example}

\subsection{Large deviations for long intense periods}
%
We work out the corresponding sequence of large deviation principles for long intense periods of queueing processes.
\begin{theorem}\label{thm:longintense}
	Let $A_i, i\geq 1$ be a sequence of iid non-negative regularly varying random variables with $A_1 \in \RV{-\alpha}$, $\alpha>1$. Assume $c>m:=\bE{A_1}$ and define the queueing process $\bQ^{K,(n)}(t):= \bQ^{nK}(nt)/n, \; t\in [0,M], \; n \geq 1$ with $\bQ^K$ defined as in \eqref{def:QK}. Denote $\kappa :=\frac{1-\theta}{c-m} K$.
	
	The intense periods $L_n:= L^{\theta K} (\bQ^{K,(n)})$ of the queueing process $\bQ^{K,(n)}$ observed on $[0,M]$ satisfy a sequence of large deviation principles on $[0,M] \backslash [0,(j-1)\kappa ]$ with the limit measure $\mu^{(j)}_L$ concentrating it's mass on $\left( (j-1)\kappa, j\kappa \right]$. Specifically,
	\begin{align*}
		L_n \in \LD{ \gamma_n^{(j)}, \mu^{(j)}_L, [0,M] \backslash [0,(j-1)\kappa]}, \quad 1 \leq j \leq \left\lfloor \frac{M}{\kappa} \right \rfloor,
	\end{align*}
	where the limit measure is given by 
	\begin{align*}
		\mu^{(j)}_L = (\nu_\alpha^j \times \Leb_{j}) \circ \left(h^{m-c}_j\right)^{-1} \circ \left( \psi_0^K \right)^{-1} \circ \left( L^{\theta K}\right)^{-1}.
	\end{align*}
\end{theorem}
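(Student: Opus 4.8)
The strategy is to obtain the sequence of LDPs for $L_n$ as the image of the hidden-large-deviations theorem for random walks (Corollary \ref{cor_hrv_drift}, in its $\D_M$ version) under a composition of three maps: the reflection map $\psi_0^{K}$, the long-intense-period functional $L^{\theta K}$, and the drift-incorporating jump representation $h_j^{m-c}$. More precisely, I would first apply Corollary \ref{cor_hrv_drift} to the iid sequence $A_i - c$, which has mean $m - c < 0$ and the same regular variation index $\alpha > 1$ as $A_1$; this gives, for each $j \geq 1$,
\begin{align*}
	\gamma_n^{(j)} \P{\bA^{(n)}/n \in \cdot} \to (\nu_\alpha^j \times \Leb_j) \circ (h_j^{m-c})^{-1}(\cdot) \quad \text{in } \M\!\left(\D_M \backslash \D_{\leq (j-1)}^{m-c}\right),
\end{align*}
where $\bA^{(n)}(t) = \bA(nt)$ is the scaled embedded random walk. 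The scaled queueing process is then $\bQ^{K,(n)} = \psi_0^K(\bA^{(n)})$ up to the identification $\bQ^{nK}(nt)/n = \psi_0^K(\bA^{(n)})(t)$ coming from the scaling properties of the reflection map on $[0,nK]$ versus $[0,K]$, and $L_n = L^{\theta K}(\psi_0^K(\bA^{(n)})) = (L^{\theta K} \circ \psi_0^K)(\bA^{(n)})$.

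The core of the argument is then two applications of the continuous mapping theorem in $\MO$ (Theorem \ref{thm:cont_map}). I would set $g := L^{\theta K} \circ \psi_0^K : \D_M \to [0,M]$ and check its hypotheses relative to the excluded sets $\mathbb C^{(j)} = \D_{\leq (j-1)}^{m-c}$ in the domain and $[0,(j-1)\kappa]$ in the range. The key structural fact to establish is the deterministic identity: for a function of the form $x = h_j^{m-c}(\bz,\bu)$ with $\bz \in (\R\backslash\{0\})^j$ and $\bu \in U_j^\uparrow$ — i.e.\ $j$ jumps superimposed on a negative drift $(m-c)t$ — the reflected path $\psi_0^K(x)$ spends at most $j\kappa$ time continuously above level $\theta K$, with the extreme value $j\kappa$ attained (and values throughout $((j-1)\kappa, j\kappa]$ achievable) precisely when the jumps are large enough and suitably clustered. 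The geometric heuristic behind $\kappa = \frac{1-\theta}{c-m}K$ is clear: after the buffer is filled to $K$ by a big arrival, the regulated process drifts down at rate $c - m$, so it takes time $\frac{(1-\theta)K}{c-m} = \kappa$ to fall from $K$ to $\theta K$; with $j$ well-placed big jumps one can "recharge" the queue $j$ times and extend the intense period to length $\approx j\kappa$. This shows $g(\D_{=j}^{m-c}) \subseteq (0, j\kappa]$ and, crucially for the boundedness-away condition, that $g^{-1}([0,M] \backslash [0,(j-1)\kappa])$ is bounded away from $\D_{\leq (j-1)}^{m-c}$: a path whose image under $g$ exceeds $(j-1)\kappa$ cannot be $J_1$-close to a path with $\leq j-1$ jumps on the negative drift, since the latter all have $g$-value $\leq (j-1)\kappa$ and $g$ is, in the relevant region, continuous enough (the level-crossing functional $L^\eta$ is continuous at paths that cross level $\eta$ transversally, which is the generic situation for step-plus-drift paths). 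One also needs $\mu^{(j)}$ (the random-walk limit) to charge zero mass to the discontinuity set of $g$; since $\mu^{(j)}$ is the pushforward of $\nu_\alpha^j \times \Leb_j$, which is absolutely continuous, and the discontinuities of $L^{\theta K}$ correspond to the lower-dimensional set of jump vectors/times producing tangential crossings or exact hits of the threshold, this holds.

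Given these checks, Theorem \ref{thm:cont_map} yields $L_n \in \LD{\gamma_n^{(j)}, \mu^{(j)}_L, [0,M]\backslash[0,(j-1)\kappa]}$ with $\mu^{(j)}_L = (\nu_\alpha^j \times \Leb_j) \circ (h_j^{m-c})^{-1} \circ (\psi_0^K)^{-1} \circ (L^{\theta K})^{-1}$, and the support statement — that $\mu^{(j)}_L$ concentrates on $((j-1)\kappa, j\kappa]$ — follows from the deterministic identity above together with the general remark (after Definition \ref{def:proc_hldp}) that the $j$-th limit measure must live on $\mathbb C^{(j+1)} \backslash \mathbb C^{(j)} = [0,j\kappa] \backslash [0,(j-1)\kappa]$; one should verify the measure is genuinely non-null on this set, i.e.\ positive on some subinterval, by exhibiting an open set of $(\bz,\bu)$ whose image under the composition lands in $((j-1)\kappa, j\kappa]$ — this is where the "suitably clustered jumps" construction is made explicit. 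The restriction $1 \leq j \leq \lfloor M/\kappa \rfloor$ is forced because on the finite horizon $[0,M]$ one cannot fit more than $\lfloor M/\kappa \rfloor$ consecutive recharge cycles.

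\textbf{Main obstacle.} The main difficulty is the careful analysis of the deterministic map $g = L^{\theta K} \circ \psi_0^K$ on step-plus-drift paths: one must (i) prove the exact bound $g(h_j^{m-c}(\bz,\bu)) \leq j\kappa$ with the correct constant $\kappa$, accounting for the fact that jumps can overshoot the buffer (the $\min\{\cdot, K\}$ clipping in the reflection) and that intense periods from different jumps may or may not merge, and (ii) pin down the discontinuity set of $g$ precisely enough to confirm both the "bounded away" hypothesis of the continuous mapping theorem and the null-set condition on $\mu^{(j)}$. The reflection map is globally Lipschitz, so the real subtlety is concentrated in the level-crossing functional $L^{\theta K}$, which is genuinely discontinuous at paths touching level $\theta K$ tangentially or achieving their supremum-excursion-length at an endpoint of $[0,M]$; handling these exceptional configurations — and verifying they form a set of $(\nu_\alpha^j \times \Leb_j)$-measure zero after pullback — is the technical heart of the proof.
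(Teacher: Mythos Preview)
Your proposal is correct and follows essentially the same route as the paper: apply Corollary~\ref{cor_hrv_drift} to the centered increments $A_i-c$, then push the resulting $\MO$-convergence through $\psi_0^K$ and $L^{\theta K}$ via two applications of Theorem~\ref{thm:cont_map}. The paper isolates exactly what you flag as the main obstacle---the continuity analysis of $L^{\theta K}$ on reflected step-plus-drift paths---into a separate lemma (Lemma~\ref{lem:L_cont}), which shows $L^{\theta K}$ is continuous on $E_j\setminus D_j$ where $E_j$ is the image of $\D_{=j}^{m-c}$ under $\psi_0^K$ and $D_j$ is the exceptional set of paths with a jump touching level $\theta K$ or landing at an endpoint; the null-set verification is then dispatched by observing that $(h_j^{m-c})^{-1}\circ(\psi_0^K)^{-1}(D_j)$ is not of full dimension in $\R^{2j}$, precisely the lower-dimensional argument you anticipate.
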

\begin{remark}
The assumption $c>\bE{A_1}$ ensures that the process will drift in the negative direction on average, such that the process being close to its buffer is actually a rare event. At the first level for $j=1$ the theorem states that the long intense periods of a queueing process with buffer $K$ and negative drift may be approximated by summing over all one-jump functions that contain a jump of size at least $\theta K$. Since the measure concentrates on one-jump functions, the maximum attainable long intense period is attained by a single jump that exceeds the buffer limit $K$, with the process drifting in negative direction at rate $m-c$ afterwards. Thus, latest at time $\kappa$ after the jump the process will leave the intense region, no matter the size of the jump. 
\end{remark}
%
%
%
\begin{remark}
Measuring the longest connected interval of time spent above a certain threshold is not a continuous operation for c\`adl\`ag processes. For example consider for $M>2$ the function $x\in \D_M$
\begin{align*}
	x(t) := \begin{cases}
		1-t &\text{if } t \in [0,1) \\
		2-t &\text{if } t \in [1,M].	
	\end{cases}
\end{align*}
Adding a small constant via $\phi_c(x)(t):= x(t) +c$ we obtain for $c<0$: $L^0(\phi_c(x)) = (1-|c|)\wedge 0$ but $L^0(x) = L^0(\phi_0(x))=2$, while at the same time $\phi_c(x) \to x$ as $c\to 0$. Consequently $L$ is not continuous. 
Nevertheless $L^{\theta K}$ is continuous almost everywhere with respect to the limit measure $\mu = \nu_\alpha \times \Leb_{1} \circ \left(h_1^m\right)^{-1} \circ \left( \psi_0^K \right) ^{-1}$ on $\D_M$ as the only way to obtain a discontinuity is through the jump at the end of the long intense interval. But the jump position is uniformly distributed hence the measure of that set is zero. Additionally, for our purposes, there is no need to consider functions outside the support of $\mu$.
%
\end{remark}
We need the following lemma to prove Theorem \ref{thm:longintense}.
\begin{lemma}\label{lem:L_cont}
Let $j \in \N$. Denote
\begin{align*}
	E_j&:= \left\{x \in \D_M: \begin{array}{l}
	\left[ x(t)=0\right]  \text{ OR } \left[x(t+s) = x(t) - s(c-m) \text{, $|s|$ small enough}\right] \\
	\text{for all but $j$ points $t$. Additionally } x(t) \geq x(t-) \forall t\in [0,M].
	\end{array}	\right\},\\
	D_j &:= \left\{\begin{array}{l} x \in E_j: \exists t \in \{\text{discontinuity points of $x$}\} \\
		\text{such that}\; x(t-) = \theta K \text{ OR } x(t)=\theta K \text{ OR } t \in \{0,M\}
		\end{array} \right\}.
\end{align*}

Then $L^{\theta K}$ is continuous on $E_j\backslash D_j$.
\end{lemma}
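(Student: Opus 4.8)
The plan is to show that for any $x \in E_j \backslash D_j$ and any sequence $x_n \to x$ in the $J_1$ metric, we have $L^{\theta K}(x_n) \to L^{\theta K}(x)$. First I would unpack the structure of functions in $E_j$: away from at most $j$ exceptional points, $x$ is either identically zero or decreasing linearly with slope $-(c-m)$, and $x$ only jumps upward. Hence $x$ is piecewise of a very rigid form — a finite alternation of linear-decrease pieces (possibly hitting and sticking at $0$, although for the queueing application the reflected process is what matters) separated by upward jumps. In particular, the set $\{u: x(u) > \theta K\}$ is a finite union of intervals, each of which (except possibly those touching $0$ or $M$) is of the form $(s,t)$ where $x$ jumps strictly above $\theta K$ at $s$ and decreases continuously through the level $\theta K$ at $t$, i.e. $x(t-) = \theta K$ and $x$ crosses the level transversally. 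The condition $x \notin D_j$ rules out exactly the degenerate crossings: no discontinuity point has $x(t-) = \theta K$ or $x(t) = \theta K$, and the endpoints $0, M$ are not themselves endpoints of an intense interval. So every intense interval $(s,t)$ has a clean upward jump strictly above $\theta K$ at its left end and a transversal downward crossing of $\theta K$ in the interior of a linear piece at its right end.

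Next I would use the characterization of $J_1$ convergence: $x_n \to x$ iff there are time-changes $\lambda_n \in \Lambda$ with $\|\lambda_n - e\| \to 0$ and $\|x_n \circ \lambda_n - x\| \to 0$. Fix an intense interval $(s,t)$ of $x$ with length $t-s$. On the one hand, since $x(u) > \theta K$ with room to spare on any compact subinterval $[s+\epsilon, t-\epsilon]$, uniform closeness of $x_n \circ \lambda_n$ to $x$ forces $x_n > \theta K$ on $\lambda_n([s+\epsilon,t-\epsilon])$, whose length converges to $t - s - 2\epsilon$; letting $\epsilon \to 0$ after $n \to \infty$ gives $\liminf_n L^{\theta K}(x_n) \ge t - s$, and taking the supremum over intense intervals gives $\liminf_n L^{\theta K}(x_n) \ge L^{\theta K}(x)$. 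On the other hand, for the reverse inequality I would argue that $x_n$ cannot sustain an intense interval much longer than the longest one of $x$: near the right endpoint $t$ of a maximal intense interval, $x$ decreases transversally through $\theta K$ with fixed slope $-(c-m) < 0$, so $x(u) < \theta K - \eta$ for $u \in (t, t+\eta']$ for suitable $\eta, \eta' > 0$; uniform closeness then forces $x_n < \theta K$ just after $\lambda_n(t)$, preventing the intense interval of $x_n$ from extending past (approximately) $t$. A symmetric argument at the left endpoint $s$ (the upward jump is strictly above $\theta K$, so just to the left $x < \theta K$ by the linear-decrease or zero structure — here one must be slightly careful that the preceding piece is below $\theta K$, which again is guaranteed by $x \notin D_j$ and finiteness of the jump set) shows $x_n < \theta K$ just before $\lambda_n(s)$. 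Since there are only finitely many such intervals and finitely many exceptional points, one concludes $\limsup_n L^{\theta K}(x_n) \le L^{\theta K}(x)$.

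The main obstacle I anticipate is the bookkeeping around the exceptional points and the endpoints $0, M$: one must make sure that a new ``intense interval'' of $x_n$ cannot be created out of a piece of $x$ that merely touches the level $\theta K$ or sits at an exceptional point, and that two adjacent intense intervals of $x$ separated by a brief dip below $\theta K$ do not merge into one longer interval for $x_n$. The first is handled precisely by the definition of $D_j$ (no tangential crossings, endpoints excluded); the second by noting that between two intense intervals $x$ dips strictly below $\theta K$ on an interval of positive length (again using the rigid linear structure and finiteness of jumps), so uniform closeness keeps $x_n$ strictly below $\theta K$ there for large $n$. A clean way to package all of this is to observe that, for $x \in E_j \backslash D_j$, the level set crossings of $x$ at height $\theta K$ are all transversal and occur at finitely many times none of which are $0$ or $M$, and then invoke a standard fact that the map ``total length of $\{x > \theta K\}$ restricted to the longest excursion'' is continuous at such $x$ in the $J_1$ topology; I would prove this fact directly via the $\lambda_n$ argument above rather than cite it. I do not expect any genuinely deep difficulty — it is a continuity-at-nice-points argument — but the case analysis for endpoints and the interaction of $\lambda_n$-distortion with the finitely many linear pieces needs to be written carefully.
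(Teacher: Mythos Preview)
Your proposal is correct and follows essentially the same route as the paper's own proof: enumerate the finitely many intense intervals of $x$, use the exclusion of $D_j$ to guarantee that every level-$\theta K$ crossing is transversal (upward jump strictly across $\theta K$ on the left, linear slope $-(c-m)$ crossing on the right), and then exploit the $J_1$ time-change to show that any $y$ close to $x$ has the same intense-interval structure up to $\varepsilon$-perturbations of the endpoints. The paper packages this as an explicit $\varepsilon$--$\zeta$ bound (with $\zeta = \varepsilon((c-m)\wedge 1)/3$) rather than your sequential $\liminf/\limsup$ formulation, and it records the key geometric fact as the equivalence ``$x(u)\in(\theta K-\delta,\theta K+\delta)$ iff $u$ lies in a $\delta/(c-m)$-neighbourhood of some right endpoint $t_i$'', but these are cosmetic differences; your treatment of the merging and spurious-interval concerns matches the paper's implication $u\in[t_i+\varepsilon,s_{i+1}-\varepsilon)\Rightarrow y(u)<\theta K$.
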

\begin{remark}
Note that $E_j$ contains all c\`adl\`ag functions which contain exactly $j$ positive jumps and decrease at rate $c-m$ otherwise, regulated to take values in $[0,K]$. The set $D_j$ further restricts to those functions whose jumps are bounded away from the critical level $\theta K$.
\end{remark}
\begin{proof}[Proof of Lemma \ref{lem:L_cont}]

To show the claim we need to introduce additional machinery. Namely we define an intense period as a period during which the function $x \in E_j$ stays continuously above the critical level $\theta K$ and enumerate all such periods. Subsequently we show that the length of each such period cannot change much in case $x$ is not perturbed too much.
Denote 
\begin{align*}
	_sL(l,x,v),\ _tL(l,x,v)&: \;\R_+ \times \D_M \times [0,M] \to \R_+ \\
	_sL(l,x,v) &:= \inf\{u \in (v,M]:\; x(u) > l \}, \\
	_tL(l,x,v) &:= \inf\{u \in (\ _sL(l,x,v),M]:\; x(u)<l\; \mathrm{OR} \; u=M \}.
\end{align*}
We assume - as is usually the case - that $\inf \emptyset = \infty$. Next we recursively record the start and end times of what we call intense periods, starting at zero.
\begin{align*}
	s_1,t_1 &:=\ _sL(l,x,0),\ _tL(l,x,0), \\
	s_i,t_i &:=\ _sL(l,x,s_{i-1}),\ _tL(l,x, s_{i-1}), \;\; i\geq 2, \\
	n_x &:= \max \{i: s_i < \infty\}
\end{align*}
In case the tuple $s_i,t_i$ is finite - either both or none are - we call $t_i-s_i$ the length of the $i^{th}$ intense period of $x$. Note that for $x \in E_j$ there are exactly $n_x$ intense periods, with $0\leq n_x \leq j$. The length of the longest of these corresponds to what we defined above in \eqref{def:lip} as the length of the long intense period of $x$. The enumeration of intense periods is well-defined for any c\`adl\`ag function on $[0,M]$, although their number need no longer be finite, nor must the linear enumeration from zero capture all instances of the function exceeding a level $l$.

Let $x \in E_j \backslash D_j$. Then the set of time points at which $x$ is above the critical level can be partitioned as
\begin{align*}
	\{ u:\; x(u) > \theta K\} = \bigcup_{i=1}^{n_x} [s_i,t_i).
\end{align*}
Since all jump discontinuities of $x$ have values bounded away from the critical level, all of the intervals $[s_i,t_i)$ and $[t_i,s_{i+1})$ are of positive length. Moreover, denoting
\begin{align*}
	\Delta^{\theta K}_J(x) := \min \{ |x(u^-) - \theta K|\wedge |x(u) - \theta K | \wedge u-0 \wedge M-u :\; u \text{ is a discontinuity point of }x\},
\end{align*}
we find that for all $0<\delta < \Delta^{\theta K}_J(x)$, 
\begin{align}\label{eqn:intense_iff}
	x(u) \in (\theta K -\delta, \theta K + \delta) \Leftrightarrow u \in \left(t_i - \frac{\delta}{c-m}, t_i + \frac{\delta}{c-m}\right)\cap [0,M] \text{ for some } 1\leq i \leq n_x.
\end{align}
Next we show that for all $\varepsilon>0$, small enough such that $\varepsilon(c-m)< \Delta^{\theta K}_J(x)$, there exists a $\zeta$ such that whenever $d_{J_1}(x,y) < \zeta$ we have 
\begin{align}\label{eqn:L_continuity_statement}
	\begin{split}
	\begin{array}{lll}
		u \in [s_i+\varepsilon,t_i-\varepsilon) &\Rightarrow &y(u) > \theta K , \\
		u \in [t_i + \varepsilon,s_{i+1}-\varepsilon) &\Rightarrow &y(u) < \theta K.
	\end{array}
	\end{split}
\end{align}
This implies that any $y$ close enough to $x$ has similar intense periods as $x$, ignoring any negligible intense periods of $y$. Hence, $L^{\theta K}$ is continuous at $x$ in $(\D_M,d_{J_1})$. 
We proceed by showing that the above claim holds for $\zeta = \frac{\varepsilon ((c-m)\wedge 1)}{3}$. Then there exists a $\lambda \in \Lambda$ such that 
\begin{align}
	\| x- y\circ \lambda \| &< \frac{\varepsilon ((c-m)\wedge 1)}{2}, \label{eqn:L_bound_vertical}  \\
	\| \lambda - e\| &< \frac{\varepsilon ((c-m)\wedge 1)}{2}. \label{eqn:L_bound_time}
\end{align}
Now \eqref{eqn:L_bound_vertical} combined with \eqref{eqn:intense_iff} (where $\delta = \varepsilon(c-m)/2$) implies
\begin{align}\label{eqn:L_continuity}
\begin{split}
	u \in [s_i , t_i-\varepsilon/2) &\Rightarrow (y \circ \lambda)(u)> \theta K + \frac{\varepsilon (c-m)}{2} - \frac{\varepsilon((c-m)\wedge1)}{2} \geq \theta K, \\
	u \in (t_i-\varepsilon/2,s_{i+1}) &\Rightarrow (y \circ \lambda)(u)< \theta K.
\end{split}
\end{align}
Accounting for the time change introduced through $\lambda$, we infer from \eqref{eqn:L_bound_time} that the last two implications in \eqref{eqn:L_continuity} hold when the two intervals get reduced by a further $\varepsilon/2$ on each side. In turn this proves the statement in \eqref{eqn:L_continuity_statement} and thus continuity of $L^{\theta K}$ on $E_j\backslash D_j$ for all $j\geq 1$. 
\end{proof}
\begin{proof}[Proof of Theorem \ref{thm:longintense}]
We apply the continuous mapping argument in Theorem \ref{thm:cont_map} twice.
First, using the Skorohod map of \eqref{def_eqn_reflectionmapping}, the large deviations result in Corollary \ref{cor_hrv_drift} and continuous mapping yield
\begin{align}\label{eqn:limit_measure_Q}
	\gamma_n^{(j)} \P{Q^{nK}(nt)/n \in \cdot } \to (\nu_\alpha \times \Leb_{j}) \circ \left(h^{m-c}_j\right)^{-1} \circ \left( \psi_0^K \right)^{-1} (\cdot), \quad n \to \infty
\end{align}
in $\M \left( \psi_0^K ( \D_M \backslash (\D_M)^{m-c}_{\leq j-1}) \right)$. This is due to the definition in \eqref{def_eqn_reflectionmapping} satisfying $\psi_0^K (x/n) = \psi_0^{nK}(x)/n$. The queueing map $\psi_0^K$ preserves the number of jumps and hence satisfies the ``bounded away'' condition of Theorem \ref{thm:cont_map}.

It is immediate from the definition of the Skorohod problem that $\psi_0^K(\D_M \backslash (\D_M)^{m-c}_{\leq j-1}) \subseteq E_j$.
Additionally, note that $\mu_Q^{(j)} (D_j) = 0,\; j\geq 1$ as $\left(h^{m-c}_j\right)^{-1} \circ \left( \psi_0^K \right)^{-1}(D_j) \subset \R^{2j}$ is not of full dimension; the condition of having $x(t)\text{ or } x(t^-) = \theta K$ amounts to imposing restrictions linking the time and value of a jump through relations of the form 
\begin{align*}
	x(t_i) - (t_i-t_{i-1}) (c-m) + J(t_i) = \theta K,
\end{align*}
where $J(t_i)$ denotes the size of the $j\thenum$ jump.
Hence Lemma \ref{lem:L_cont} above combined with a second application of Theorem \ref{thm:cont_map} yields the result.

\end{proof}

\subsection{Calculating explicit limit measures}

In the following we compute the limit measures $\mu_{L}^{(1)}$ and $\mu_{L}^{(2)}$. We assume $M> 2\kappa$ throughout. For $j=1$ we obtain
\begin{align*}
	\mu^{(1)}_{L}( (l,\infty) ) = \begin{cases}
			(M-l) ( l(c-m) + \theta K)^{-\alpha} &\text{if } l \in (0,\kappa] \\
			0 &\text{otherwise}.	
	\end{cases}
\end{align*}
In other words, the measure $\mu_{L}^{(1)}$ is the sum of a point mass at $l = \kappa$ with value $K^{-\alpha}(M-\kappa)$ and an absolutely continuous part on $(0,\kappa)$. 
Considering this initial large deviations estimate on its own we would approximate $\Prob ( L^{\theta K}( \bQ^K) > \kappa ) \approx 0$. For any finite buffer non-limit scenario this may be too coarse. A more refined estimate based on hidden large deviations allows for more accuracy. Namely on $[0,M] \backslash [0,\kappa ]$ we have 
\begin{align*}
	\gamma_n^{(2)} \P{ L^{\theta nK}(\bQ^{nK}(nt)) \in \cdot } \to \mu_{L}^{(2)}(\cdot), \quad n \to \infty,
\end{align*}
in $\M( [0,M] \backslash [0,\kappa])$, which concentrates on $(\kappa,2\kappa]$. This again can be explained by the rate $\gamma_n^{(2)}$ only allowing for at most two jumps in the random walk. Any intense period with length $L<\kappa$ is more likely to happen due to one jump hence processes containing only one jump must be excluded in the hidden large deviation principle. Long intense periods with length $L>2\kappa$ are not possible since the maximum length is achieved if the buffer is filled at some initial time $t_0< M-2\kappa$ starting the long intense period and an additional jump at time $t_0 + \kappa$ of size at least $(1-\theta)K$. We compute the limit measure for the events $\{L > l\}, \; l \in (\kappa,2\kappa]$:
\begin{align}
	\begin{split}\label{eqn:two:jumps:L}
	\mu_{L,\theta,K}^{(2)} ( (l,\infty) ) &= \mu^{(2)} ( \{ \text{All two jump functions with $L>l$} \}) \\
		&= (M-l) \int_{\theta K}^\infty \nu_\alpha (\diff j_1) \int_{l-\kappa}^{(K\wedge j_1 - \theta K)/(c-m)} \diff u_2 \int_{l(c-m) - ( K\wedge j_1 - \theta K)}^\infty \nu_\alpha(\diff j_2) \\
		&= (M-l) \int_{\theta K}^\infty \nu_\alpha(\diff j_1) \ind{\frac{K\wedge j_1 - \theta K }{c-m} > l-\kappa} \frac{  \frac{K\wedge j_1 - \theta K }{c-m} - (l-\kappa)}{ \left( l(c-m) - (K \wedge j_1 - \theta K)   \right)^{\alpha} } \\
		&= \frac{M-l}{c-m}  \int_{\theta K + l(c-m) - (1-\theta)K}^K \alpha x^{-\alpha-1} \frac{x - \theta K - l(c-m) + (1-\theta K) }{\left( l(c-m) + \theta K - x\right) ^\alpha } \diff x \\
		&\quad + \frac{M-l}{c-m}K^{-\alpha} \frac{2(1-\theta) K - l(c-m) }{ \left( l(c-m) - (1-\theta) K \right)^\alpha } .
	\end{split}
\end{align}
\begin{remark}
	Further limit measures can be computed but the explicit derivation becomes more cumbersome as the level increases.
\end{remark}	
\subsection{Simulation study - combining the first two LDPs}
\label{sec:simul}
%
%
%
%
%
%
%
%
%
%
%
%
In this section we provide some insights on the practical relevance of hidden large deviations. 
%
%
%
%
We find that in the setting of the simulation study described in Example \ref{ex:longintense} we are able to numerically validate the rate and limit measure of hidden large deviations. 
The previous section established large deviation principles for long intense periods for any interval $[(j-1)\kappa, j\kappa]$ with $j\leq \lfloor M/\kappa \rfloor$, each with its own rate. 
And indeed, for the theory of LDPs we may only treat these limit measures separately due to the different magnitudes of the rates $\gamma_n^{(j)}$. In practice however, for any finite observation period of a queue with finite buffer size, several of the limit measures might be relevant for a single statistic. 
The simulation study will examine the interplay of different rates in a single probability estimate. 
We proceed to construct the two estimates involving the first and second level LDPs separately.

\begin{figure}[h]
	\centering
	\includegraphics[scale=0.5]{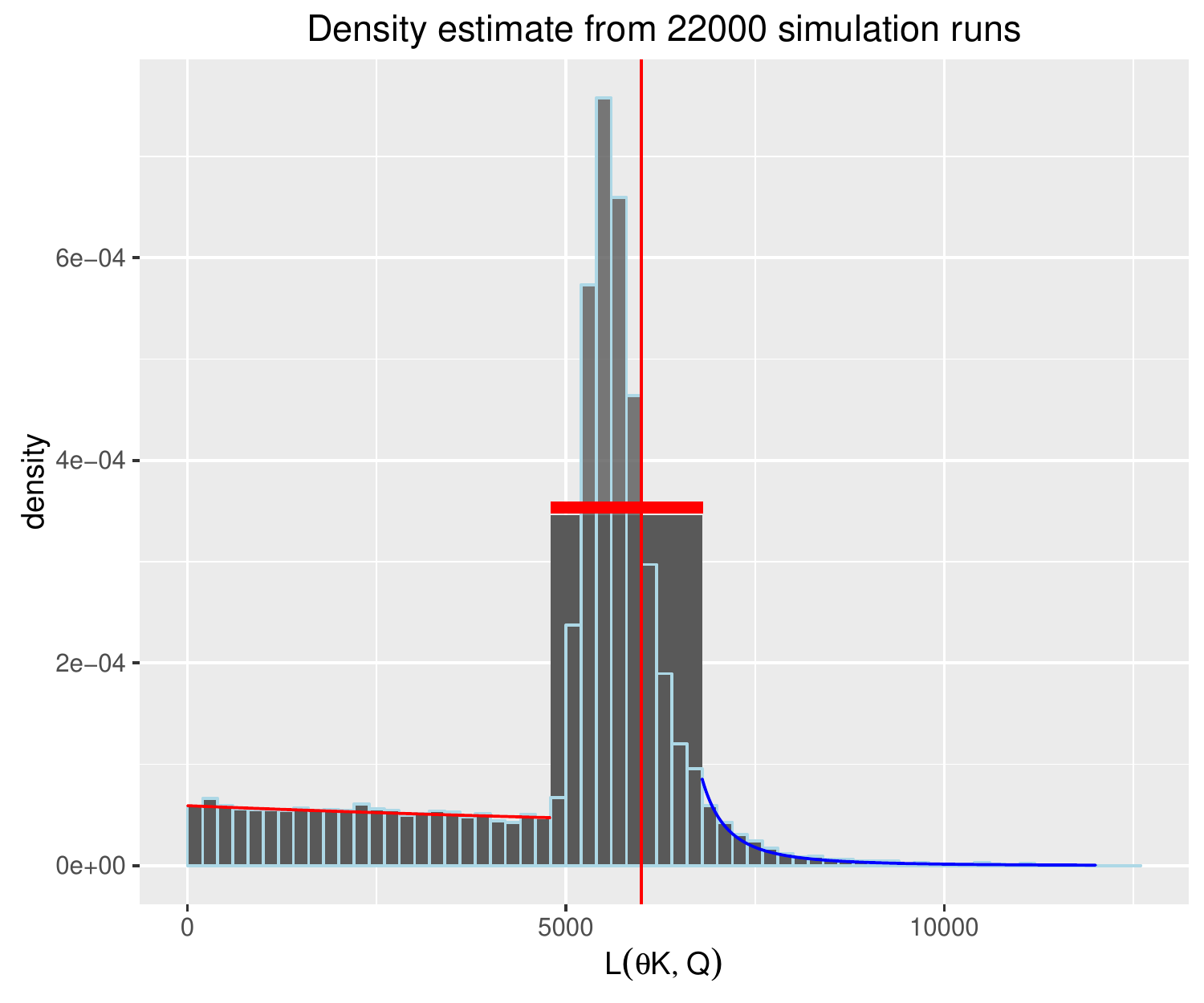}
    \includegraphics[scale=0.5]{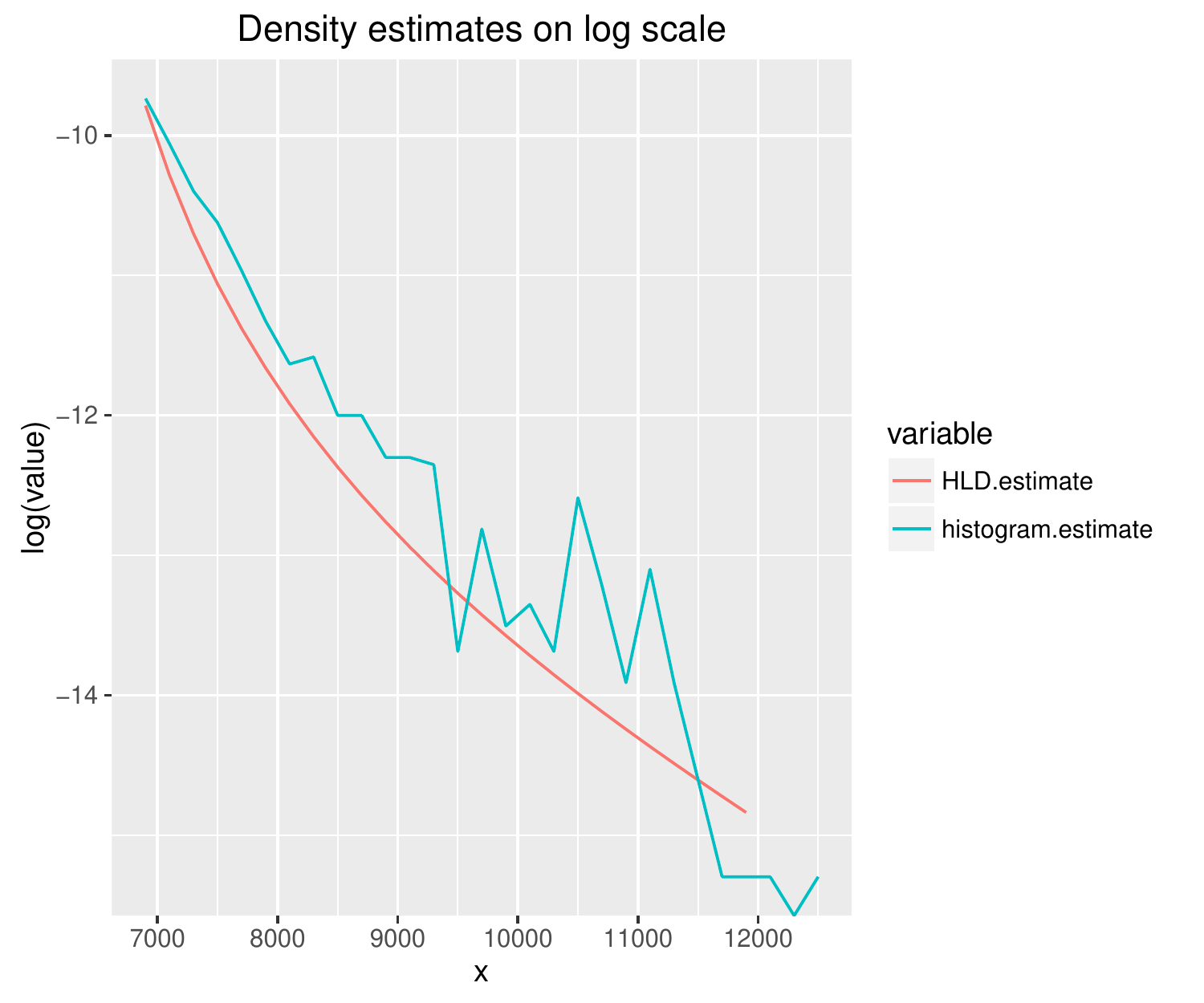}
	\caption{Left: Histogram for $L^{\theta K}(\bQ)| L^{\theta K}(\bQ)>0$ generated by 22000 realizations of a queueing process with $\bE{B} = \rho=0.5$, 50000 arrivals at integer time points and maximum capacity $K=20000$. The critical level was set to $\theta = 0.85$. The red vertical line marks the location of the theoretical point mass at $L = \frac{1-\theta}{c-m} K$. The service time distribution follows an exact power law with $-\alpha = -1.44$. Approximations of the densities with the large deviations estimate (in red) and the hidden large deviations estimate (in blue) are added on top of the histogram.
    Right: Same data as on the left restricted to $L>7000$. Density estimate with HLD compared to empirical values of the histogram on logarithmic scale.}
    \label{fig_hist_and_dens}
\end{figure}

\paragraph{One jump}
According to traditional large deviation estimates for heavy tailed queueing processes with ``large'' buffers, the long intense period will be due to a single jump reaching above the threshold level $\theta K$ and the queue drifting in direction $-(c-m)$ thereafter.
To use Theorem \ref{thm:longintense} we need to choose a queue sequence number $n$. 
We thus obtain the following approximation.
\begin{align*}
	\P{ L^{\theta K}( \bQ^K)> l } &= \P{L^{\theta K/n}(\bQ^{K/n,(n)}) > l/n}    \\
				&= \P{L_n > l/n} \\
				&\approx \frac{1}{\gamma_n^{(1)}} \mu^{(1)}_L((l/n,\infty)) \\		
	&= \begin{cases}
		n^{\alpha} \P{A_1>n} (M-l) (l(c-m) + \theta K)^{-\alpha} & \text{if } l \in (0, \kappa] \\
		0 & \text{otherwise} .
	\end{cases} 
\end{align*}
The point mass at $\kappa$ yields
\begin{align*}
	\P{ L^{\theta K}(\bQ^K)  \in (\kappa - \varepsilon ,\kappa + \varepsilon)} \approx \left(\frac{n}{K}\right)^\alpha \P{A_1 > n} (M-\kappa).
\end{align*}

\paragraph{One or two jumps}
The two jump measure can be approximated in the same fashion as the approximation for one jump above using equation \eqref{eqn:two:jumps:L} instead. To get a single estimate for the distribution of the long intense periods of the queueing process $\bQ^K$ we propose to combine the two estimates into a single approximation.
\begin{align}
	\begin{split}\label{eqn:approx:L}
	\P{ L^{\theta K}( \bQ^K) > l} \approx \begin{cases}
		n\P{A_1>n} \mu_{L}^{(1)}( (l/n,\infty) ) & \text{if }  l \in (0,\kappa] \\
		n^2 \P{A_1>n}^2 \mu^{(2)}_{L}( ( l/n,\infty) ) & \text{if }  l \in ( \kappa, 2 \kappa] \\
		0 & \text{otherwise},
		\end{cases}
	\end{split}
\end{align}
where the buffer size $K$ and observation horizon $M$ are scaled accordingly in the limit measures.

In Figure \ref{fig_hist_and_dens} we plot the same histogram as in Figure \ref{fig:empty_hist_L}, and view it as an estimate of the density 
$$\P{L^{\theta K}( \bQ^K) \in \diff l | L^{\theta K}( \bQ^K)> 0},\; l>0.$$ 
The limit measure $\mu^{(1)}_{L}$ puts zero mass on values beyond the vertical red line which marks the location of the point mass of $\mu^{(1)}_{L}$. Due to $n$ being finite we expect some values immediately to the right of the point mass as caused by only a finite number of random variables approximating the mean rate of decrease for the queue content. Nevertheless, concerning the values on the far right we believe an explanation via Hidden Large deviations (HLD) is best suited for the distribution of $L^{\theta K}( \bQ^K)$.
Hence we add the estimate in \eqref{eqn:approx:L} to the plot. 
To visualize the point mass we fix two $\varepsilon_1,\varepsilon_2>0$ such that the point mass at $\kappa$ gets distributed over the area $(\kappa-\varepsilon_1 , \kappa + \varepsilon_2)$. Outside of this region we approximate the measure with the corresponding densities.
Additionally we provide a plot of the tail of the distribution on a log scale to better visualize the fit for the hidden large deviation estimate. The figures clearly show how our hidden large deviation estimates closely approximate the histogram observed (more clearly in the right plot in Figure \ref{fig_hist_and_dens}.)


%
%
%
%
%
%
%
%
%
%
%
%
%
%
%
%
%
%
\section{Conclusion and further remarks}\label{sec:conclusion}

We provide limit measures for successively rarer large deviations of random walks with regularly varying iid increments. Scaling time and space appropriately we are able to obtain limit measures for large deviations of queueing processes which preserve the drift term in the limit. In the final section we showed that hidden large deviations at the second level, though happening at the squared rate of the first large deviation, are numerically observable. Clearly, our (hidden) large deviation estimate performs quite well to approximate the histogram - even at the tail (on a log scale.)  


For future directions of study, one can explore large deviations on a space $\D\backslash \bigcup_{j=1}^\infty \D_{=j}$ which we have not ventured into. The choice of our space of convergence was governed by jumps in heavy-tailed processes. Namely, we expect all of the random variables to attain values on the same scale. The exact structure of such deviations remains largely an open question. Similarly, we did not explore into situations where the ``iid" assumption gets relaxed. A $j\thenum$ level LDP happens at a rate 
which is the $j\thenum$ power of the rate of the first LDP. This clearly is a consequence of the independence among the random variables driving the random walk. Future work on weakly dependent variables are under current investigation.

\section*{Acknowledgements}
We would like to thank Parthanil Roy for interesting discussions on the preliminary ideas of (hidden) large deviations.
Additionally, we gratefully acknowledge support from MOE Tier 2 grant MOE-2013-T2-1-158. 
\bibliographystyle{imsart-nameyear}
\bibliography{bib_LDQ.bib}
\end{document}